\newcommand{\RR}{\mathbb R}
\newcommand{\R}{\mathbb R}
\newcommand{\bna}{\begin{eqnarray}}
\newcommand{\ena}{\end{eqnarray}}
\newcommand{\ba}{\begin{eqnarray*}}
\newcommand{\ea}{\end{eqnarray*}}
\newcommand{\bs}[1]{}
\newtheorem{theorem}{Theorem}[section]
\newtheorem{corollary}[theorem]{Corollary}
\newtheorem{lemma}[theorem]{Lemma}
\newtheorem{proposition}[theorem]{Proposition}
\newtheorem{remark}[theorem]{Remark}
\newtheorem{definition}[theorem]{Definition}
\DeclareMathOperator{\intr}{int}
\newcommand{\ra}{\rangle}
\newcommand{\la}{\langle}
\newcommand{\CC}{{\mathcal C'}}
\newcommand{\CS}{{\mathcal S}}
\newcommand{\CM}{{\mathcal M}}
\def\p{{\bf p}}
\def\b{{\bf b}}
\def\pn{{\bf p =(p_1, \dots, p_n) }}
\def\q{{\bf q}}
\def\v{{\bf v}}
\def\w{{\bf w}}
\def\r{{\bf r}}
\def\x{{\bf x}}
\def\y{{\bf y}}
\def\z{{\bf z}}
\def\t{{\bf t}}
\def\a{{\bf a}}
\def\b{{\bf b}}
\begin{document}
\title{Prestress Stability of Triangulated Convex Polytopes and 
Universal Second-Order Rigidity}
\renewcommand\footnotemark{}

\author{Robert Connelly and  
Steven J. Gortler
\thanks{This work was partially supported by NSF grants  DMS-1564493 and DMS-1564473.}}
\maketitle 

\begin{abstract}  We prove that universal second-order 
rigidity implies universal prestress stability and that triangulated
convex polytopes in three-space
(with holes appropriately positioned)
are prestress stable.

{\bf Keywords: } rigidity, prestress stability, universal rigidity 

\end{abstract}
\section{Introduction} \label{section:introduction}

A classic result of Cauchy \cite{Cauchy} implies that the boundary of
any convex polytope in $3$-space is rigid, when each of its natural
$2$-dimensional faces is held rigid, even though they are allowed to
rotate along common edges like hinges.  In \cite{Dehn}, Dehn proved
that a polytope with triangular faces is infinitesimally rigid, and
therefore rigid, when the edges are regarded as fixed length bars
connected to its vertices.  A. D. Alexandrov \cite{Alexandrov} showed
that any convex triangulated polytope, where the natural surface may
consist of non-triangular faces, is still infinitesimally rigid, as
long as the vertices of the triangulation are not in the relative
interior of the natural faces.  
Connelly \cite{connelly-second} proved
that \emph{any} convex triangulated polytope in $3$-space is second-order
rigid, no matter where the vertices of the triangulation are
positioned, and second-order rigidity implies rigidity in
general.  
The only trouble with this last result is that second-order rigidity
is a very weak property.  A stronger property, which we will now
discuss, is called prestress stability.

When a framework is constructed with
physical bars, if it is rigid but not infinitesimally rigid, it is often
called ``shaky" in the engineering literature~\cite{Whiteley-Panel}.
For such a rigid, but not infinitesimally rigid, framework, if  each
of the bars is at its natural rest length, then the framework might deform
significantly under external forces~\cite{Connelly-Whiteley}.  But in some
situations, this shakiness can be rectified by placing some of the
bars in either tension or compression.  When successful, the resulting
structure is at a local minimum of an internal energy functional
that can be verified using the ``second derivative test''.  Such
structures will not deform greatly under external forces, even though
they are infinitesimally flexible.
Thus, the 
stiffness of a physical framework ultimately depends not just on the
geometry, but also on the physical properties and tensional 
states of the material.

With this in mind, 
a (geometric) bar framework is called \emph{prestress stable} if 
there exists a way to place its bars in tension or compression so that 
the resulting structure 
is at a local minimum of an internal energy functional
that can be verified using the second derivative test.  

In this paper, (Theorem~\ref{thm:pss}) we show that any arbitrarily
triangulated convex polytope in $3$-space, is in fact prestress
stable.  Indeed, extending the results of 
\cite{connelly-second}, we
show that there are many ways of positioning holes in the faces of the
polytope so that any triangulation of the remaining surface is
prestress stable. Our condition is that for each face $F$ of the
polytope, there is another convex polytope $P_F$ projecting
orthogonally onto $F$, with $F$ as the bottom face, such that each
hole of $F$ is the projection of an upper face of $P_F$, as in Figure
\ref{fig:holy-face}.  Furthermore, for any triangulation of $P$, minus the holes, we assume that the boundary of each face $F$ is infinitesimally rigid in the plane of $F$.

\begin{figure}[ht]
    \begin{center}
        \includegraphics[width=0.5\textwidth]{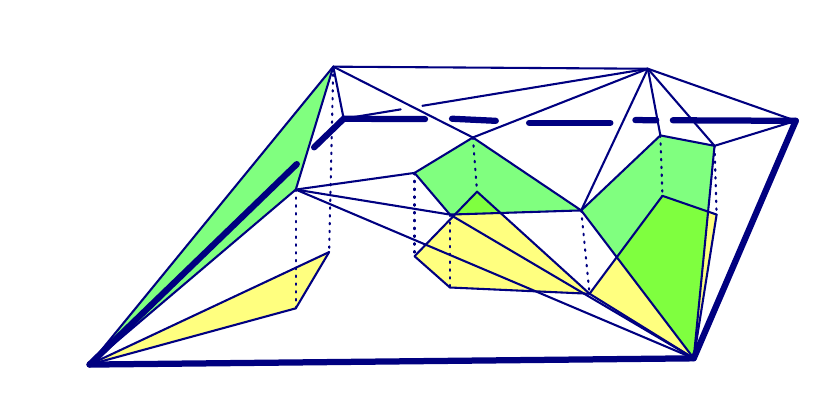}%
        \end{center}
    \caption{The green faces in the convex polytope project onto the yellow holes in the base face.  The rest of the bottom face minus the yellow holes can be triangulated at will.}
    \label{fig:holy-face}
    \end{figure}

As part of obtaining this result, 
we first show (Corollary \ref{cor:u2r}) that, in general, if any bar
framework is universally second-order rigid in the sense that it remains
second-order rigid when thought of as a (degenerate) framework 
in all higher dimensions, then
it must be
prestress stable in its original and all higher dimensions.
This result is also related to the problem of characterizing when 
a semi-definite feasibility problem has a 
singularity degree of 
exactly one~\cite{shadow}.

Some of these ideas were briefly sketched in the 
survey~\cite{Connelly-rigidity},
but have not previously been given a complete and formal treatment.

\section{Definitions and Background} \label{section:definitions}

Let $(G,\p)$, denote a (bar and joint) framework where $\pn$ 
is a configuration of $n$ points $\p_i
\in \RR^d$, and $G$ is a corresponding graph, 
with $n$ vertices and with $e$ edges
connecting some pairs of
points of $\p$.

\subsection{Local Rigidity}

Here we define a sequence of local rigidity properties.

We start with the most basic idea defining a rigid structure.
We say that a framework $(G,\p)$
is \emph{locally rigid} in $\RR^d$ if,
there are no continuous motions 
in $\RR^d$ of the
configuration $\p(t)$, for $t \ge 0$, that preserve the
distance constraints:
\ba
|\p_i(t) - \p_j(t)|=|\p_i - \p_j|
\ea
for all edges, $\{i, j\}$, of $G$, where $\p(0)=\p$,
unless $\p(t)$ is congruent to $\p$ for all $t$.
By a congruence, we mean that 
$\p(t)$ can be obtained from $\p$ by simply
restricting a Euclidean isometry of $\RR^d$ to the vertices.

The simplest way to confirm that a framework is locally rigid is to look at the 
linearization of the problem.

A \emph{first-order flex} or \emph{infinitesimal flex} of $(G,\p)$ 
in $\RR^d$
is a
corresponding assignment of vectors $\p'=(\p_1', \dots, \p_n')$,
$\p'_i \in \RR^d$ 
such
that for each $\{i,j\}$, an edge of $G$, the following
holds:
\begin{eqnarray}
(\p_i - \p_j)\cdot (\p'_i - \p'_j)&=&0 \label{eqn:first} 
\end{eqnarray}

The \emph{rigidity matrix} $R(\p)$ is the $e$-by-$nd$ matrix, where 
\[R(\p)\p'=(\dots, (\p_i - \p_j)\cdot (\p'_i - \p'_j), \dots)^t, \]
for $\p' \in \R^{nd}$. Here each row of the matrix is indexed by
an edge $\{i,j\}$ of the graph, and $()^t$ is the transpose.
We write $R(\p,\q) = R(\q,\p)=R(\p)\q$ for any $\q \in
\R^{nd}$, which we call the \emph{rigidity form} for the graph $G$ in
$\R^d$.  With this,   Equation (\ref{eqn:first}) can
be rewritten as,

\ba
R(\p,\p')&=&0 
\ea

A first-order flex
$\p'$ is \emph{trivial} if it is the restriction to the vertices, of the
time-zero derivative of a smooth
motion of isometries of $\R^d$. This is  equivalent to there being a
$d$-by-$d$
skew symmetric matrix $A$ and a vector $\b \in \R^d$ such that 
\begin{eqnarray}
\p'_i=
A\p_i +\b
\label{eqn:skew}
\end{eqnarray}
 for all $i=1, \dots, n$.

Note that the property of being
a trivial infinitesimal flex
is independent of the graph $G$.

A framework $(G,\p)$ is called \emph{infinitesimally rigid} in $\R^d$
if it has no infinitesimal flexes in $\R^d$ except for trivial ones.

A classical theorem states:
\begin{theorem} 
If a framework $(G,\p)$ is infinitesimally rigid in $\R^d$, then it is
locally rigid in $\R^d$.
\end{theorem}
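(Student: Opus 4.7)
The natural approach is to apply the inverse function theorem to the edge-length map, showing that infinitesimal rigidity means the constraint variety's only tangent directions at $\p$ come from congruences.

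First I would set up the squared edge-length map $f:\R^{nd}\to\R^e$ defined by $f(\q)_{\{i,j\}}=|\q_i-\q_j|^2$ over the edges $\{i,j\}$ of $G$. A short calculation shows that $df_\p(\p')=2R(\p)\p'$ (up to row-column conventions), so $\ker df_\p$ is exactly the space of first-order flexes of $(G,\p)$ in $\R^d$. Next I would let the Euclidean group $E(d)$ act diagonally on $\R^{nd}$, observe that $f$ is $E(d)$-invariant, and note that the orbit $O_\p=E(d)\cdot\p$ is a smooth submanifold whose tangent space at $\p$ is, by equation~(\ref{eqn:skew}), precisely the space of trivial first-order flexes.

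Under the hypothesis of infinitesimal rigidity, $\ker df_\p = T_\p O_\p$. I would then choose a linear subspace $N\subset\R^{nd}$ complementary to $T_\p O_\p$ and a small neighborhood $U$ of $\p$ in the affine slice $\p+N$. Since $df_\p|_N$ is injective, the inverse function theorem gives that $f|_U$ is a homeomorphism onto its image, so $U$ contains no configuration distinct from $\p$ with the same edge lengths. To finish, for any continuous motion $\p(t)$ with $\p(0)=\p$ and $f(\p(t))=f(\p)$, I would invoke the fact that the action map $E(d)\times(\p+N)\to\R^{nd}$, $(g,\q)\mapsto g\cdot\q$, is a local diffeomorphism near $(\mathrm{id},\p)$; thus, for small $t$, one can factor $\p(t)=g(t)\cdot\q(t)$ continuously with $\q(t)\in U$. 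Invariance of $f$ forces $f(\q(t))=f(\p)$, hence $\q(t)=\p$ and $\p(t)=g(t)\cdot\p$ is congruent to $\p$ for all small $t$. A standard connectedness argument then propagates this along the motion.

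The main technical obstacle is the description of the orbit $O_\p$ when $\p$ does not affinely span $\R^d$: the stabilizer of $\p$ in $E(d)$ then has positive dimension and $\dim O_\p$ drops accordingly. One needs to check that $O_\p$ is still a smooth submanifold (which follows from $E(d)$ being a Lie group acting smoothly with closed stabilizer) and that $T_\p O_\p$ still equals the image of the infinitesimal-action map — i.e., the space of trivial flexes as defined by~(\ref{eqn:skew}). An alternative route that avoids group-quotient technicalities is to fix an explicit gauge (translate so $\p_1=0$ and rotate so that a maximal affinely independent subset of $\p_2-\p_1,\ldots,\p_n-\p_1$ lies in a standard flag) and apply the inverse function theorem to the reduced map in the remaining coordinates, noting that infinitesimal rigidity is exactly the statement that the reduced Jacobian is injective.
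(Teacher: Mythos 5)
The paper does not actually prove this statement: it is quoted as a classical theorem with only a citation to \cite{Connelly-Whiteley}, so there is no internal proof to compare against. Your argument is the standard Asimow--Roth-style proof and is correct in outline. Two points deserve tightening. First, the step ``$df_\p|_N$ injective implies $f|_U$ is a homeomorphism onto its image'' is not literally the inverse function theorem, since the source and target dimensions differ; what you are invoking is the local immersion (constant rank) theorem, which does give local injectivity of $f$ on the slice $\p+N$. Second, the action map $E(d)\times(\p+N)\to\R^{nd}$ is a local diffeomorphism at $(\mathrm{id},\p)$ only when the stabilizer of $\p$ in $E(d)$ is trivial, i.e.\ when $\p$ affinely spans $\R^d$; you correctly flag this degenerate case (which genuinely occurs, e.g.\ a simplex with at most $d+1$ vertices realized in $\R^d$), and either of your proposed remedies --- working with a local slice transverse to the stabilizer, or gauge-fixing coordinates and applying the inverse function theorem to the reduced map --- closes the gap. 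Note finally that the paper's definition of local rigidity demands that $\p(t)$ be congruent to $\p$ for \emph{all} $t$, not just small $t$, so your concluding connectedness step is genuinely needed; it goes through because congruent frameworks are simultaneously infinitesimally rigid, which makes the set of parameters $t$ with $\p(t)$ congruent to $\p$ both open and closed in the connected parameter interval.
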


The converse of the theorem is false, so there is room for weaker conditions
that can be used to certify local rigidity. One such notion is called 
prestress stability. The rough 
idea is to look for an energy function on configurations
for which $(G,\p)$ is a local minimum. 

To this end
we define an
\emph{equilibrium stress} for a framework $(G,\p)$ to be an assignment of
a 
scalar $\omega_{ij}=\omega_{ji}$ to each edge $\{i,j\}$ of the graph
$G$, such that for each vertex $i$ of $G$,
\ba
\sum_j \omega_{ij}(\p_i-\p_j)=0,
\ea
where the non-edges of the stress $\omega = (\dots, \omega_{ij},
\dots)$ have zero stress.  

In this paper, we will use the following proposition
(see e.g.~\cite[Lemma 2.5]{Connelly-global}):
\begin{proposition}
\label{prop:cokernel}
Any equilibrium stress $\omega \in \R^e$ for $(G,\p)$ must be in the
cokernel of $R(\p)$.
\end{proposition}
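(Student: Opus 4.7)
The plan is that this is essentially a definitional unwinding: the cokernel condition on $\omega$, when paired against an arbitrary test vector, decomposes vertex-by-vertex into exactly the equilibrium stress equations. So I would prove it by direct algebraic manipulation rather than by invoking any external machinery.

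Concretely, I would start by recalling that the cokernel of $R(\p)$ is the left null space, i.e., the set of $\omega \in \R^e$ with $\omega^t R(\p) = 0$. Using the given formula for the rigidity form, for any test vector $\q \in \R^{nd}$,
\[
\omega^t R(\p)\q \;=\; \omega^t R(\p,\q) \;=\; \sum_{\{i,j\}\in E} \omega_{ij}\,(\p_i - \p_j)\cdot(\q_i - \q_j).
\]
Next I would regroup this sum by vertex rather than by edge. Expanding $(\q_i - \q_j)$ and using the symmetry $\omega_{ij} = \omega_{ji}$ to combine the two contributions from each edge $\{i,j\}$, the expression becomes
\[
\sum_{i=1}^{n} \q_i \cdot \Bigl(\sum_{j}\omega_{ij}(\p_i - \p_j)\Bigr),
\]
where the inner sum runs over vertices $j$ (with $\omega_{ij} = 0$ on non-edges by convention).

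Finally, since the above pairing vanishes for \emph{every} $\q \in \R^{nd}$ if and only if each of the vertex sums $\sum_j \omega_{ij}(\p_i - \p_j)$ is the zero vector in $\R^d$, the cokernel condition is exactly the equilibrium condition in the definition of an equilibrium stress. There is no real obstacle here beyond keeping the indexing of rows, columns, and vertex blocks of $R(\p)$ consistent; the mild subtlety is the factor-of-two bookkeeping when converting an edge-indexed sum into a vertex-indexed sum, which is handled cleanly by the symmetry $\omega_{ij}=\omega_{ji}$ and the fact that each edge contributes to precisely two vertex blocks with opposite signs.
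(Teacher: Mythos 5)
Your proof is correct. Note that the paper does not actually prove this proposition; it imports it verbatim from the cited reference (Lemma 2.5 of the Connelly paper on global rigidity), so there is no in-paper argument to compare against. Your definitional unwinding is the standard argument and almost certainly the one in the source: pairing $\omega$ against $R(\p)\q$, regrouping the edge-indexed sum into vertex blocks via $\omega_{ij}=\omega_{ji}$, and observing that vanishing for all test vectors $\q$ is exactly the equilibrium condition at every vertex. Two small remarks. First, you in fact establish the stronger ``if and only if'' statement (cokernel membership is \emph{equivalent} to being an equilibrium stress), while the proposition only asserts one implication; that is harmless and arguably more useful. Second, there is no genuine factor-of-two issue: each edge appears once in the edge sum and contributes one term to each of its two endpoint blocks, so the regrouped vertex sum $\sum_i \q_i \cdot \bigl(\sum_j \omega_{ij}(\p_i-\p_j)\bigr)$ comes out on the nose with no compensating constant. (The paper's description of $R(\p)$ as ``$nd$-by-$e$'' is a transposition slip --- it acts on $\R^{nd}$ and outputs a vector in $\R^e$ --- but you correctly read the cokernel as the left null space, so nothing is affected.)
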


We say that a framework $(G,\p)$ is
\emph{prestress stable} in $\R^d$
if there is an equilibrium stress $\omega$ for
$(G,\p)$ such that for every non-trivial first-order flex $\p'$ in $\R^d$
of
$(G,\p)$, we have $\sum_{i<j}\omega_{ij}(\p'_i-\p'_j)^2 >0$.  
(When this inequality holds, we 
say that the stress $\omega$ \emph{blocks} the first order flex
$\p'$.)
From this definition it is clear that
if a framework $(G,\p)$ is infinitesimally rigid in $\R^d$, then, using
the all-zero stress, it is automatically 
prestress stable
in $\R^d$.

The following is shown in~\cite{Connelly-Whiteley}.

\begin{theorem} 
\label{thm:pssrigid}
If a framework $(G,\p)$ is prestress stable in $\R^d$, then it is 
locally
rigid in $\R^d$.
\end{theorem}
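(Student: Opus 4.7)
My plan is to use the equilibrium stress $\omega$ witnessing prestress stability to construct a quadratic \emph{stress energy}
\[
E(\q) \;:=\; \sum_{\{i,j\}} \omega_{ij}\,|\q_i - \q_j|^2
\]
(summed over edges of $G$), then argue that any edge-length-preserving motion of $\p$ must be Euclidean. Two properties of $E$ are the crux. First, the equilibrium condition for $\omega$ is exactly $\nabla E(\p) = 0$, and because $E$ is quadratic in $\q$ we obtain the exact identity
\[
E(\p + \v) - E(\p) \;=\; \sum_{\{i,j\}} \omega_{ij}\,|\v_i - \v_j|^2
\]
for all $\v \in \R^{nd}$. Second, $E$ is trivially constant on the edge-length-preserving set $C := \{\q : |\q_i - \q_j| = |\p_i - \p_j| \text{ for each edge}\}$, since each summand depends only on the fixed edge length. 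Combining these two observations, for any $\q = \p + \v \in C$ one has $\sum \omega_{ij}|\v_i - \v_j|^2 = 0$.

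I argue by contradiction. If $(G,\p)$ is not locally rigid, then (after possibly restarting the motion at a later time and applying a fixed isometry) there is a continuous motion $\p(t) \in C$ with $\p(0) = \p$ that fails to be congruent to $\p$ for arbitrarily small $t > 0$. Using the smooth local action of the Euclidean group on configurations, I gauge-fix by picking an isometry $\phi_t$ minimizing $|\phi_t(\p(t)) - \p|$ and replacing $\p(t)$ with $\hat{\p}(t) := \phi_t(\p(t)) \in C$, so that the displacement $\v(t) := \hat{\p}(t) - \p$ lies in the orthogonal complement of the space of trivial first-order flexes at $\p$. Selecting $t_k \to 0$ with $\v_k := \v(t_k) \neq 0$, the normalized vectors $\u_k := \v_k/|\v_k|$ have a convergent subsequence with unit limit $\u$, still orthogonal to the trivial first-order flexes and therefore non-trivial.

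The final step is to identify $\u$ as a non-trivial first-order flex witnessing the failure of prestress stability. Expanding each edge constraint yields
\[
(\p_i - \p_j)\cdot((\v_k)_i - (\v_k)_j) \;=\; -\tfrac{1}{2}|(\v_k)_i - (\v_k)_j|^2,
\]
so dividing by $|\v_k|$ and passing to the limit gives $(\p_i - \p_j)\cdot(\u_i - \u_j) = 0$, making $\u$ a first-order flex. Dividing the identity $\sum \omega_{ij}|(\v_k)_i - (\v_k)_j|^2 = 0$ (derived above from $\hat{\p}(t_k) \in C$) by $|\v_k|^2$ and passing to the limit yields $\sum \omega_{ij}|\u_i - \u_j|^2 = 0$, contradicting prestress stability. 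The chief technical obstacle is the gauge step: producing the $\v_k$ orthogonal to trivial flexes is a standard slice-theorem argument (and requires a mild non-degeneracy assumption, e.g.\ that $\p$ affinely spans $\R^d$), but must be carried out carefully so that the resulting limit $\u$ is genuinely non-trivial and not a hidden infinitesimal isometry.
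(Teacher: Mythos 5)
Your proposal is correct, but note that the paper does not actually prove Theorem~\ref{thm:pssrigid}; it cites it to Connelly--Whiteley. The argument given there runs through the ``second derivative test'' picture described in the introduction: one builds a genuine energy functional $H(\q)=\sum f_{ij}(|\q_i-\q_j|^2)$ with $f_{ij}'=\omega_{ij}$ at the rest lengths and with the second-derivative (stiffness) coefficients chosen large enough that the Hessian of $H$ at $\p$ is positive definite transverse to the congruence orbit; since $H$ is constant on the constraint set $C$, the strict local minimum forces every nearby point of $C$ to be congruent to $\p$. Your route is genuinely different and somewhat leaner: you use only the pure stress energy $E_\omega$, observe that the equilibrium condition kills the linear term so that $E_\omega(\p+\v)-E_\omega(\p)=\sum\omega_{ij}|\v_i-\v_j|^2$ vanishes identically on $C$, and then extract, by gauge-fixing against the congruence orbit and normalizing a sequence of displacements $\v_k\to 0$, a unit limit $\u$ that is a first-order flex, is orthogonal to the trivial flexes (hence non-trivial), and has $E_\omega(\u)=0$ --- contradicting prestress stability. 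This avoids choosing stiffness coefficients entirely, at the price of the compactness/slice argument; it is essentially the same limiting technique used to prove that second-order rigidity implies local rigidity. The two small points you should make explicit are that your $\v_k$ is orthogonal to the trivial flexes at $\hat\p(t_k)$ rather than at $\p$ (the orthogonality at $\p$ is only recovered in the limit, using that the trivial-flex space varies continuously near a configuration of full affine span), and that the ``restart at a later time'' step uses the closedness of the congruence orbit so that $\p(s)$ at the critical time is itself congruent to $\p$. Neither is a gap, just a place where care is needed.
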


It is also shown in~\cite{Connelly-Whiteley} that this definition of 
prestress stability coincides with the motivating 
property, described above in the 
introduction.  This means that  there is an energy for which $(G,\p)$ is a minimum that
can be verified with the second derivative test. 
In this correspondence, the coefficients, $\omega_{ij}$, correspond
to the first derivative of the energy of the associated bar with respect to 
changes in its squared-length.

The converse of Theorem~\ref{thm:pssrigid} is again false, 
so there is room for even weaker conditions
that can be used to certify local rigidity. One such notion, used
in \cite{connelly-second} to study the rigidity of triangulated convex
polytopes, is second-order rigidity. The idea is motivated by
looking
at the first two derivatives of some proposed continuous flex of the framework.

A \emph{second-order flex} of $(G,\p)$ in $\RR^d$
is a
corresponding assignment of vectors $\p'=(\p_1', \dots, \p_n')$,
$\p'_i \in \RR^d$ and $(\p_1'', \dots, \p_n'')$, $\p''_i \in \RR^d$ such
that for each $\{i,j\}$ an edge of $G$ the following
hold:
\begin{eqnarray}
(\p_i - \p_j)\cdot (\p'_i - \p'_j)&=&0 \label{eqn:first2} \\ (\p_i -
  \p_j)\cdot (\p''_i - \p''_j)+(\p'_i -
  \p'_j)^2&=&0.\label{eqn:second2}
\end{eqnarray}

Using the rigidity matrix defined above,
the Equations (\ref{eqn:first2}) and (\ref{eqn:second2}) can
be rewritten as:

\begin{eqnarray}
R(\p,\p')&=&0 \label{eqn:vfirst2} \\
R(\p,\p'')+R(\p',\p')&=&0.\label{eqn:vsecond2}
\end{eqnarray}

We say that $(G,\p)$ is \emph{second-order rigid} in $\RR^d$
if there is no
second-order flex $({\p}',{\p}'')$ of $(G,\p)$ in $\RR^d$, with
$\p'$  non-trivial as a first-order flex.

The following is proven in~\cite{connelly-second}.

\begin{theorem} 
If a framework $(G,\p)$ is second-order rigid in $\R^d$, then it is
locally rigid in $\R^d$.
\end{theorem}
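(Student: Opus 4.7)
The plan is to prove the contrapositive: assuming $(G,\p)$ is not locally rigid in $\R^d$, I will produce a second-order flex $(\p',\p'')$ whose first-order part $\p'$ is non-trivial. The configurations $\q\in\R^{nd}$ satisfying the edge-length constraints $|\q_i-\q_j|=|\p_i-\p_j|$ form a real algebraic set $V$, and the congruence orbit of $\p$ under the Euclidean group $E(d)$ is a semi-algebraic subset of $V$. Non-local-rigidity means $\p$ lies in the closure of the semi-algebraic set $V \setminus E(d)\cdot\p$, so by the curve selection lemma there is an analytic path $\p(t)$, $t\in[0,\epsilon)$, with $\p(0)=\p$, $\p(t)\in V$ for all $t$, and $\p(t)$ not congruent to $\p$ for any $t>0$.

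Taylor expand $\p(t) = \p + \sum_{m\ge k} t^m \a_m$, where $\a_k \neq 0$ is the first non-vanishing coefficient. Substituting into $|\p_i(t)-\p_j(t)|^2 = |\p_i-\p_j|^2$ and matching coefficients of $t^m$: the orders $k \le m < 2k$ yield $(\p_i-\p_j)\cdot(\a_{m,i}-\a_{m,j})=0$, so each such $\a_m$ is a first-order flex, and the order $t^{2k}$ yields
\[
2(\p_i-\p_j)\cdot(\a_{2k,i}-\a_{2k,j}) + |\a_{k,i}-\a_{k,j}|^2 = 0,
\]
which is exactly Equation~(\ref{eqn:vsecond2}) for the pair $(\p',\p'') := (\a_k,\,2\a_{2k})$. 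If $\a_k$ is non-trivial, we are done. Otherwise $\a_{k,i} = A\p_i + \b$ for some skew-symmetric $A$ and $\b \in \R^d$; then $g(s)(\x) := \exp(sA)\x + s\b$ is an analytic path of isometries, and $\tilde\p(t) := g(t^k)^{-1}(\p(t))$ is again an analytic edge-preserving path, not congruent to $\p$ for small $t>0$, and a direct Taylor computation shows that its leading non-vanishing coefficient now sits at strictly higher order than $k$. Iterate.

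The main obstacle is the termination of this iteration. If it never terminates, the vanishing order tends to infinity, and combining the accumulated isometry corrections, the analyticity of the composition forces the original $\p(t)$ to lie in a single analytic family of congruences of $\p$, contradicting the hypothesis $\p(t) \notin E(d)\cdot\p$ for $t>0$. A secondary, largely bookkeeping, step is the clean application of the curve selection lemma to $V \setminus E(d)\cdot\p$, which requires the congruence orbit to be semi-algebraic; this holds because it is the image of the algebraic group $E(d)$ under the polynomial evaluation map at $\p$.
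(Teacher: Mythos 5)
First, note that the paper does not actually prove this theorem; it quotes it from \cite{connelly-second}. Your proposal follows essentially the same strategy as that source: obtain an analytic flex via the curve selection lemma, Taylor-expand the edge-length constraints to read off a second-order flex, and strip off trivial leading terms by composing with isometries.

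The genuine gap is exactly where you flag it: termination of the iteration. Your one-sentence resolution --- that ``the analyticity of the composition forces the original $\p(t)$ to lie in a single analytic family of congruences'' --- is not an argument. After $j$ corrections the accumulated isometry $T_j(t)=g_j(t^{k_j})^{-1}\circ\cdots\circ g_1(t^{k_1})^{-1}$ converges, as $j\to\infty$, only as a formal power series (each Taylor coefficient stabilizes once $k_j$ exceeds its degree); nothing you have said shows that this formal limit is a convergent analytic family of isometries, and a merely formal congruence between $\p(t)$ and $\p$ does not by itself contradict $\p(t)\notin E(d)\cdot\p$. This is the crux of the whole proof, so it cannot be waved at. Two standard ways to close it: (i) avoid the iteration entirely by normalizing once, composing $\p(t)$ with the analytic family of isometries that puts it in ``standard position'' ($\q_1(t)=\p_1$, $\q_2(t)-\p_1$ on a fixed ray, and so on); the normalized path is analytic and non-constant, and the pinning equations force its first non-vanishing Taylor coefficient to be a non-trivial first-order flex (at least when $\p$ affinely spans $\R^d$), so no iteration is needed. (ii) Keep your iteration but argue termination correctly: if it never terminates, then for \emph{every} pair $\{i,j\}$, edge or not, the analytic function $|\p_i(t)-\p_j(t)|^2-|\p_i-\p_j|^2$ vanishes to infinite order at $t=0$ (it equals the corresponding quantity for the $j$-th corrected path, which is $\p+O(t^{k_j})$), hence vanishes identically, hence $\p(t)$ is congruent to $\p$ for all small $t$ --- a contradiction. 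A secondary, smaller issue: the paper's definition of local rigidity only provides a continuous flex that leaves the congruence class at \emph{some} positive time, not immediately, so before applying curve selection you should pass to the first time $t_*$ at which the path leaves the orbit $E(d)\cdot\p$ (the orbit is closed, and $\p(t_*)$ is congruent to $\p$), work at $\p(t_*)$, and transport the resulting second-order flex back by that congruence.
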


Second-order rigidity is a natural property, but it has some practical
difficulties, which can be seen with a dual formulation  
\cite{Connelly-Whiteley}.

\begin{theorem}\label{thm:demon}
A framework $(G,\p)$ is second-order rigid in $\RR^d$
if and only if for every
non-trivial first-order flex $\p'$ in $\RR^d$
 of $(G,\p)$, there is an
equilibrium stress $\omega$ such that
$\sum_{i<j}\omega_{ij}(\p'_i-\p'_j)^2 >0$.
\end{theorem}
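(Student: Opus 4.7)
The plan is to derive Theorem \ref{thm:demon} as a direct application of the Fredholm alternative (image-versus-cokernel duality) to the second-order equation \eqref{eqn:vsecond2}. Fix any non-trivial first-order flex $\p'$. Such a $\p'$ extends to a second-order flex if and only if there exists $\p'' \in \R^{nd}$ with $R(\p)\p'' = -R(\p',\p')$, i.e., if and only if the vector $R(\p',\p') \in \R^e$ lies in the image of the linear map $R(\p) : \R^{nd} \to \R^e$. So second-order rigidity of $(G,\p)$ becomes the statement that for every non-trivial $\p'$, the vector $R(\p',\p')$ fails to lie in the image of $R(\p)$.

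By elementary linear algebra, $R(\p',\p')$ lies in that image iff it is orthogonal to every element of the cokernel $\{\omega \in \R^e : R(\p)^t \omega = 0\}$. The next step is to identify this cokernel with the space of equilibrium stresses: unpacking $R(\p)^t \omega = 0$ row-by-row reproduces precisely the vertex-by-vertex equilibrium condition $\sum_j \omega_{ij}(\p_i-\p_j) = 0$, so the cokernel of $R(\p)$ equals the equilibrium-stress space of $(G,\p)$ (one inclusion is Proposition \ref{prop:cokernel}, the other is immediate from unpacking). Hence the non-existence of $\p''$ is equivalent to the existence of an equilibrium stress $\omega$ with $\omega^t R(\p',\p') \neq 0$. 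A direct expansion gives
\[
\omega^t R(\p',\p') = \sum_{i<j} \omega_{ij}(\p'_i - \p'_j)^2,
\]
and since $-\omega$ is again an equilibrium stress, the condition ``$\neq 0$'' may be upgraded to ``$>0$'' without loss of generality.

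Quantifying over all non-trivial first-order flexes $\p'$ and combining the equivalences above yields both directions of the theorem. The proof is essentially mechanical once the Fredholm framing is in place; the only points requiring a small amount of care are the identification of the cokernel of $R(\p)$ with the equilibrium-stress space and the sign-flip that converts a Farkas-style nonvanishing assertion into the desired strict positivity. No additional geometric input (e.g., about trivial flexes, convexity, or the ambient dimension $d$) is needed for this equivalence.
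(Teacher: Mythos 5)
Your proof is correct and is essentially the standard argument: the paper states Theorem~\ref{thm:demon} without proof, citing \cite{Connelly-Whiteley}, and the image-versus-cokernel duality you describe (a non-trivial flex $\p'$ extends to second order iff $R(\p',\p')$ lies in the column space of $R(\p)$, iff it is orthogonal to every equilibrium stress) is precisely the argument given there. The two points needing care --- identifying the cokernel of $R(\p)$ with the equilibrium-stress space, and flipping the sign of $\omega$ to turn nonvanishing into strict positivity --- are both handled explicitly, so there is no gap.
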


From this it is clear that
if a framework $(G,\p)$ is prestress stable in $\R^d$, then it is 
second-order rigid
in $\R^d$.
But in second-order rigidity, 
it can happen that no one stress \emph{blocks} (has positive energy on)
 all non-trivial
first-order flexes. Rather
one can think of there being a ``demon" living in the framework that
senses any particular non-trivial first-order flex $\p'$ and blocks
it.

Putting these together, we can summarize the
state of affairs as in~\cite{Connelly-Whiteley}:
\begin{theorem}
Infinitesimally rigid in $\R^d$ implies
prestress stability in $\R^d$ which implies  second-order
rigidity in $\R^d$ which implies locally rigidity in $\R^d$.  None of these
implications are reversible. (See Figure~\ref{fig:local}.)
\end{theorem}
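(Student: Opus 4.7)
The plan is to verify the three forward implications (each of which reduces to a one-line argument using facts already established in the excerpt) and then to witness non-reversibility by exhibiting three small frameworks. Infinitesimal rigidity implies prestress stability because if $(G,\p)$ has no non-trivial first-order flex in $\R^d$, then the universal quantifier in the definition of prestress stability is vacuous, so the all-zero stress (always an equilibrium stress) trivially certifies it. Prestress stability implies second-order rigidity via the dual formulation in Theorem~\ref{thm:demon}: a single stress $\omega$ with $\sum_{i<j}\omega_{ij}(\p'_i-\p'_j)^2>0$ for every non-trivial first-order flex $\p'$ supplies, for each individual $\p'$, a stress of positive energy, which is precisely the criterion of Theorem~\ref{thm:demon}. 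Second-order rigidity implies local rigidity by the theorem of \cite{connelly-second} recorded just above.

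For non-reversibility, I would exhibit three classical examples, all from \cite{Connelly-Whiteley}. First, a locally rigid framework that is not second-order rigid: a rigid configuration with a non-trivial first-order flex $\p'$ on which every equilibrium stress has vanishing energy, violating Theorem~\ref{thm:demon}. Second, a second-order rigid framework that is not prestress stable: a framework with several linearly independent non-trivial first-order flexes and several equilibrium stresses, tuned so that each individual stress has strictly positive energy on some but not all flexes, while for every flex some stress blocks it --- the ``demon'' behavior described in the text. Third, a prestress stable framework that is not infinitesimally rigid: a small flat-symmetric polyhedral framework (e.g., a flat bipyramid or a planar spoked polygon) admitting a non-trivial first-order flex that is arrested by an explicit nonzero equilibrium stress.

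The main obstacle is the second example, where one must produce an explicit configuration and verify that no single equilibrium stress achieves strictly positive energy on every non-trivial first-order flex while, at the same time, for each individual flex some stress does. This requires arranging the signs of the quadratic forms $\p'\mapsto\sum_{i<j}\omega_{ij}(\p'_i-\p'_j)^2$ associated with a basis of the stress space to be incompatible on the cone of non-trivial first-order flexes, a calculation genuinely specific to the chosen example. The other two non-reversibility examples and all three forward implications reduce immediately to the definitions and to results already available in the excerpt.
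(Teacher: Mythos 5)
Your three forward implications are correct and are exactly the paper's argument: the paper records each of them inline in the paragraphs preceding this theorem (the all-zero stress handles the vacuous case for infinitesimal rigidity, Theorem~\ref{thm:demon} gives prestress stability $\Rightarrow$ second-order rigidity, and the cited theorem of \cite{connelly-second} gives second-order rigidity $\Rightarrow$ local rigidity), and then states the summary theorem with a citation to \cite{Connelly-Whiteley}. So that half of your proposal is fine and needs no further comment.

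The gap is in the non-reversibility half, which is where all the substance of this theorem lives. You do not actually produce any of the three separating examples; you describe the properties each one would need to have and defer the construction. The third separation (prestress stable but not infinitesimally rigid) is genuinely easy --- e.g., a bar subdivided by a midpoint vertex, with the stress $\omega_{13}=\omega_{23}=1$, $\omega_{12}=-\tfrac12$ blocking the perpendicular flex of the midpoint --- and you could and should just write it down. But the first separation (locally rigid yet not second-order rigid) and especially the second (second-order rigid yet not prestress stable, the ``demon'' example) are nontrivial constructions requiring careful verification of both the positive claim (rigidity of the appropriate order) and the negative one (failure of every candidate stress); you explicitly flag the second as an unresolved obstacle rather than resolving it. As written, your proposal proves the implications but only asserts their irreversibility. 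The paper handles this by outsourcing the entire theorem to \cite{Connelly-Whiteley}, where these examples are constructed; to make your proof self-contained you would need to either do the same explicitly (cite the specific examples) or carry out the constructions you sketch, the second of which is a genuine piece of work and not a routine verification.
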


\begin{figure}[ht]
    \begin{center}
        \includegraphics[width=.6\textwidth]{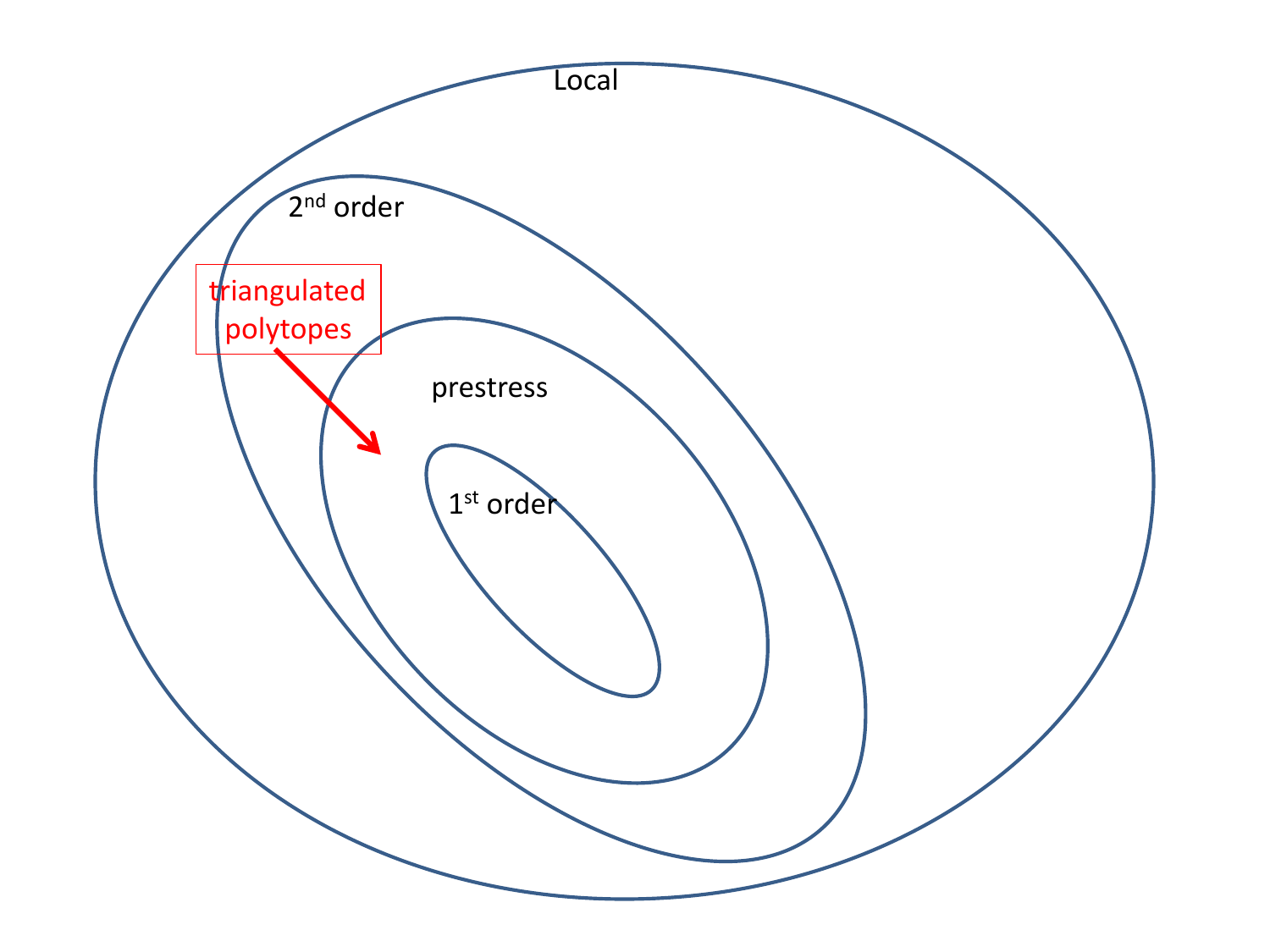}%
        \end{center}
    \caption{Nested space of $d$-dimensional local rigidity 
properties of frameworks. This paper shows that triangulated polytopes
are not only second-order rigid in $\R^3$, but are also prestress stable in 
$\R^3$.}
    \label{fig:local}
    \end{figure}

\begin{remark}
There are significant difficulties in attempting to 
define a meaningful notion of third-order rigidity~\cite{Connelly-Servatius}.
\end{remark}

One of the two main results of this paper (Theorem~\ref{thm:pss})
is that a triangulated convex
polytope in $\RR^3$ is not only second-order rigid in $\RR^3$
(a result of \cite{connelly-second}) but is in fact prestress stable
in $\RR^3$.

\begin{remark}
A framework $(G,\p)$ is called \emph{globally rigid in $\R^d$} if
there are no 
other (even distant)
frameworks $(G,\q)$ in $\R^d$ having the same  edge
lengths as $(G,\p)$, other than congruent frameworks.
This is a much stronger property
than local rigidity, but we will note in the next section that
the global/local distinction vanishes in unconstrained dimensions.
\end{remark}

\subsection{Universal Rigidity}
\label{sec:ur}

In order to study the prestress stability in $\RR^3$ of triangulated
convex polytopes, and indeed for its own sake as well, we look at what
happens when we regard our $d$-dimensional framework as realized 
(degenerately)
in
$\R^D \supset \R^d$ 
where $D>d$ and $D$ is arbitrarily large.
It is easy
to see that we do not need to take  $D$ to be any larger than
$n-1$, where $n$ is the number of the vertices of
$G$, but we use the symbol $D$ instead of $n$ to emphasize that this denotes
a number of spatial dimensions.
This then brings up the notion of universal rigidity.

Given a framework $(G,\p)$ with a $d$-dimensional affine span, but realized in 
 $\RR^D$, we define the notions of 
\emph{universal local rigidity}, 
\emph{universal second-order rigidity} and 
\emph{universal prestress stability} 
as respectively, local rigidity, second-order rigidity and prestress stability
in $\RR^D$. 
(If $d < n-1$, then a  framework with a $d$-dimensional affine span
can never be infinitesimally rigid
in $\RR^D$, and so there is no need to define universal infinitesimal rigidity).

These three properties
naturally inherit the inclusion relations of the previous section:
universal prestress stability implies universal second-order rigidity 
which implies universal local rigidity.
Though it is not clear, a-priori,
 if any of these implications reverse. 
In this paper
we will conclude that in fact universal prestress stability is no different
than universal second-order rigidity, (but that these are stronger properties
than universal local rigidity).

\begin{remark}
\label{rem:globalUR}
It turns out that there is no need to define a separate notion of
universal global rigidity, as it is immediately implied by universal
local rigidity~\cite{Connelly-energy, Connelly-Gortler}. 
Thus we henceforth drop the
``local'' qualifier from the term universal rigidity.
This makes universal rigidity a very strong property indeed,
that, for example implies global rigidity in $\R^d$.
\end{remark}

\begin{figure}[ht]
    \begin{center}
        \includegraphics[width=.6\textwidth]{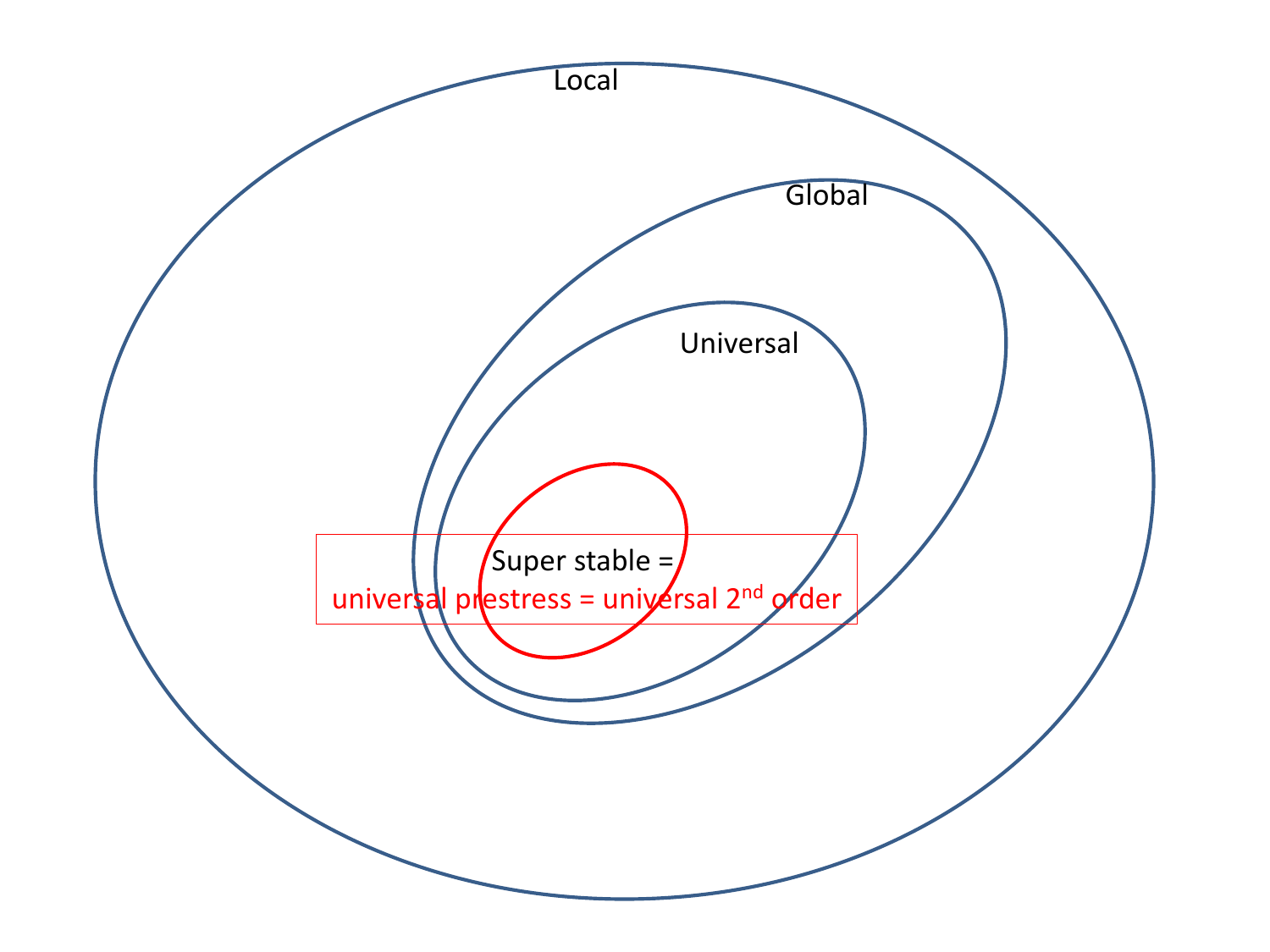}%
        \end{center}
    \caption{Nested space of local, global, and various universal rigidity 
properties of frameworks. 
This paper shows that universal second-order rigidity is the same as
universal prestress stability.
The relationship between the properties shown here to those of 
Figure~\ref{fig:local} are not obvious, other than the fact that
universal prestress stability implies prestress stability in $\RR^d$.
}
    \label{fig:univ}
    \end{figure}

The property of universal prestress stability is actually the same
as another property called \emph{super stability}, which we now describe.

Given an equilibrium stress $\omega= (\dots, \omega_{ij},
\dots)$ for a framework $(G,\p)$ in $\R^D$,
 we define the \emph{stress energy} as
\[
E_{\omega}(\q)= \sum_{i<j} \omega_{ij}(\q_i-\q_j)^2,
\]
which is a quadratic form on the configuration space $\R^{nD}$. 
The matrix of $E_\omega$
with respect to the standard basis of
$\R^{nD}$ is $I^D\otimes \Omega$. The matrix $\Omega$, 
called the \emph{equilibrium stress matrix}
 corresponding to the equilibrium stress $\omega$,
is 
defined
as the symmetric $n \times n$ matrix whose $i,j$ entry is
$-\omega_{ij}$, when $i \ne j$, and is such that all row and column
sums are zero.  
The energy $E_{\omega}$ is positive semidefinite (PSD) 
over $\R^{nD}$
if and
only if $\Omega$ is a PSD matrix.

In this paper, we will use the following proposition:
\begin{proposition}
[{\cite[Prop. 1.2]{Connelly-global}} and
{\cite[Cor 1]{Connelly-energy}}]
\label{prop:maxrank}
Suppose that 
the affine span of $(G,\p)$ is $d$-dimensional.
The kernel of any equilibrium stress matrix 
$\Omega$ for $(G,\p)$
must be of dimension at least $d+1$, and thus the rank of
$\Omega$ can be at most $n-d-1$.

When the rank of $\Omega$ is $n-d-1$, 
if $\Omega$ is also an equilibrium stress matrix for another framework
$(G,\q)$ then
the configuration $\q$ is an affine image of the configuration $\p$.

When the rank of $\Omega$ is $n-d-1$ and $\Omega$ is PSD, if 
$\q$ is another configuration
with zero energy under  $E_\omega$, 
then 
the configuration $\q$ is an affine image of the configuration $\p$.
\end{proposition}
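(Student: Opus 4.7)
The plan is to translate everything into one piece of linear algebra. Arrange $\p$ as an $n\times d$ matrix $P$ whose $i$th row is $\p_i$. The vertex equilibrium equations $\sum_j \omega_{ij}(\p_i-\p_j)=0$ together with the row-sum-zero condition on $\Omega$ can be packaged as the pair of matrix equations $\Omega P = 0$ and $\Omega \mathbf{1}=0$, where $\mathbf{1}\in\R^n$ is the all-ones vector. Hence every column of the $n\times(d+1)$ matrix $[\,\mathbf{1}\mid P\,]$ lies in $\ker\Omega$. The hypothesis that the affine span of $\p$ is $d$-dimensional is precisely the statement that this matrix has rank $d+1$, which gives $\dim\ker\Omega\geq d+1$ and hence $\mathrm{rank}(\Omega)\leq n-d-1$. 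This handles the first assertion.

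For the second assertion, when the rank of $\Omega$ is exactly $n-d-1$, the containment above is an equality, so $\mathbf{1}$ together with the columns of $P$ form a basis of $\ker\Omega$. If $\Omega$ is also an equilibrium stress matrix for $(G,\q)$, the same packaging produces $\Omega Q = 0$ and $\Omega \mathbf{1}=0$, where $Q$ is the analogous coordinate matrix of $\q$. I would then write each column of $Q$ as a linear combination of $\mathbf{1}$ and the columns of $P$, obtaining $Q = PA + \mathbf{1}b^t$ for some $d\times d$ matrix $A$ and $b\in\R^d$, which row by row says $\q_i = A^t\p_i + b$, i.e.\ $\q$ is an affine image of $\p$.

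For the third assertion I would use the formula
\[
E_\omega(\q)=\mathrm{tr}(Q^t\Omega Q)=\sum_k (Qe_k)^t\Omega(Qe_k),
\]
now with $Q$ the $n\times D$ coordinate matrix of $\q$. Under the PSD hypothesis each summand is nonnegative, so vanishing of the total forces $\Omega(Qe_k)=0$ for every coordinate column, i.e.\ each column of $Q$ lies in $\ker\Omega$. The spanning argument of the previous paragraph then applies verbatim to conclude that $\q$ is an affine image of $\p$.

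I do not expect a serious obstacle here: the whole proposition reduces to the single observation that $\mathrm{colspan}[\,\mathbf{1}\mid P\,]\subseteq\ker\Omega$, together with a maximal-rank hypothesis that promotes this inclusion to an equality of $(d+1)$-dimensional subspaces. The only point requiring care is which hypothesis---matching equilibrium stress or vanishing PSD energy---forces the columns of $Q$ into $\ker\Omega$; once that is in hand, the same affine-combination argument closes out both of the last two parts.
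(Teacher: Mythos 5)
Your proof is correct. Note, however, that the paper itself offers no proof of this proposition: it is imported verbatim from the cited references (\cite[Prop.~1.2]{Connelly-global} and \cite[Cor.~1]{Connelly-energy}), so there is no in-paper argument to compare against. Your argument is the standard one from those sources: the equilibrium condition together with the zero row sums is exactly $\Omega P=0$ and $\Omega\mathbf{1}=0$, the $d$-dimensional affine span makes $[\,\mathbf{1}\mid P\,]$ of rank $d+1$, and maximal rank promotes the inclusion $\mathrm{colspan}[\,\mathbf{1}\mid P\,]\subseteq\ker\Omega$ to equality; the PSD case reduces to the same kernel condition because $v^t\Omega v=0$ forces $\Omega v=0$ for a PSD matrix. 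The only bookkeeping slip is in the second part, where you call $A$ a $d\times d$ matrix: since $\q$ may live in $\R^D$ with $D>d$, the coefficient matrix should be $d\times D$ (equivalently, the affine map is $\q_i=M\p_i+b$ with $M$ of size $D\times d$ after embedding $\p$ in $\R^D$). This does not affect the argument, which closes out all three assertions as claimed.
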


Let $L$ be an affine  subspace of some Euclidean space. We say
that a set of lines $\{L_1, \dots, L_m \} \subset L$, lie on a
\emph{conic at infinity} for $L$ if, regarding the line directions in $L$ as
points at infinity
in a corresponding real projective space, they all lie on a 
(non-trivial) 
conic in that space.  

Concretely, suppose we have a framework $(G,\p)$ in $\R^d$
with a $d$-dimensional span.
Then $(G,\p)$ has its edge directions on a conic at infinity for 
$\langle \p \rangle$, the affine span of $\p$,
 iff there exists a 
non-zero symmetric $d$-by-$d$ matrix,
$Q$, such that for all edges $\{i,j\}$, 
we have $(\p_i-\p_j)^tQ(\p_i-\p_j)=0$.
From Lemma~\ref{lem:fq} the non-zero property for $Q$ is equivalent to 
the existence of some non-edge pair
$\{k,l\}$, 
where $(\p_k-\p_l)^tQ(\p_k-\p_l)\neq 0$.

So, for a framework $(G,\p)$ in $\R^D$
with a $d$-dimensional span,
$(G,\p)$ has its edge directions on a conic at infinity for 
$\langle \p \rangle$ iff there exists a symmetric $D$-by-$D$ matrix,
$Q$, such that for all edges $\{i,j\}$, 
we have $(\p_i-\p_j)^tQ(\p_i-\p_j)=0$, 
but for some non-edge pair
$\{k,l\}$, 
we have $(\p_k-\p_l)^tQ(\p_k-\p_l)\neq 0$.

Conics at infinity are important due to the following proposition:
\begin{proposition}
[{\cite[Prop. 4.2]{Connelly-global}}]
\label{prop:conic}
Let $(G,\p)$ be a framework in $\R^D$. There exists a non-congruent framework
$(G,\q)$ with the same edge lengths as $(G,\p)$
and where $\q$ is an affine image
of $\p$ iff
the edge directions of $(G,\p)$ lie on a conic at infinity for
$\langle \p \rangle$.
\end{proposition}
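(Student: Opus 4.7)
The plan is to parametrize affine images of $\p$ as $\q_i = A\p_i + \b$ with $A$ a $D\times D$ matrix and $\b \in \R^D$, and to set $Q := A^tA - I$, which is automatically symmetric. The edge-length condition $|\q_i-\q_j|=|\p_i-\p_j|$ then rewrites as $(\p_i-\p_j)^t Q(\p_i-\p_j)=0$ for every edge $\{i,j\}$, which is exactly the defining equation of the conic-at-infinity condition. Both directions of the iff then amount to transferring between the data $\q$ and the data $Q$.

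The first preparatory step is a congruence lemma: $\q$ is congruent to $\p$ iff $Q$ vanishes on the linear span of the pairwise differences $\p_i - \p_j$. If $Q = 0$ on this span, then $A$ restricted to it is an isometry, which extends to an orthogonal map $B$ on $\R^D$; choosing $\mathbf{c} := \b + (A - B)\p_1$ makes the Euclidean isometry $x \mapsto Bx + \mathbf{c}$ send each $\p_i$ to $\q_i$. Conversely, if $\q_i = B\p_i + \mathbf{c}$ with $B$ orthogonal, subtracting from $\q_i = A\p_i + \b$ shows $A-B$ vanishes on every $\p_i - \p_j$, so $A$ agrees with an isometry on their span and $Q = 0$ there.

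The second preparatory step is a polarization observation: if $(\p_i-\p_j)^t Q(\p_i-\p_j) = 0$ for every pair $\{i,j\}$ of vertices (edge or not), then picking a base point $\p_1$ and expanding $\p_i - \p_j = (\p_i - \p_1) - (\p_j - \p_1)$ forces all cross-terms $(\p_i - \p_1)^t Q (\p_j - \p_1)$ to vanish, so $Q$ vanishes on the entire span of differences. Combined with the congruence lemma, non-congruence of $\q$ with $\p$ is equivalent to the existence of a non-edge pair $\{k,l\}$ with $(\p_k - \p_l)^t Q (\p_k - \p_l) \neq 0$. This settles the forward direction: the $Q$ coming from a non-congruent edge-length-preserving affine image witnesses the conic-at-infinity condition.

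For the reverse direction, the construction is explicit. Given a symmetric $Q$ that vanishes on edge directions but is nonzero on some non-edge pair, rescale to $\epsilon Q$ with $\epsilon > 0$ small enough that $I + \epsilon Q$ is positive definite, and set $A := (I + \epsilon Q)^{1/2}$, the unique symmetric positive-definite square root. Then $\q_i := A\p_i$ satisfies $|\q_i - \q_j|^2 - |\p_i - \p_j|^2 = \epsilon (\p_i - \p_j)^t Q (\p_i - \p_j)$, which vanishes on edges and is nonzero on the distinguished non-edge pair; the latter rules out congruence via the congruence lemma, because $\epsilon Q$ is nonzero on a difference vector in $\langle \p \rangle$. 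The main obstacle I expect is the congruence lemma: one must carefully absorb the translation freedom and pin down exactly how non-congruence of $(\p,\q)$ translates into an algebraic statement about $Q$ on the span of differences. Once that is in hand, the remaining steps reduce to a short polarization argument and a small-parameter square-root trick.
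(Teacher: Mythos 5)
The paper does not prove this proposition---it is imported verbatim as \cite[Prop.~4.2]{Connelly-global}---so there is no in-paper argument to compare against. Your proof is correct and is essentially the standard argument behind the cited result: the correspondence $Q = A^tA - I$, the polarization/congruence lemma identifying non-congruence of $\q$ with $\p$ with the non-vanishing of $Q$ on the span of the differences (equivalently, on some non-edge pair), and the construction $A = (I+\epsilon Q)^{1/2}$ for the converse.
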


Following \cite{Connelly-energy} we say a framework $(G,\p)$ is
\emph{super stable} if there is an equilibrium stress $\omega$ for
$(G,\p)$ such that its associated stress matrix $\Omega$ is PSD, the
rank of $\Omega$ is $n-d-1$, where $d$ is the dimension of the affine
span $\langle \p \rangle$ of $\p$, and the edge directions do not
lie on a conic at infinity of $\langle \p \rangle$.

It turns out that super stability is equivalent to 
prestress stability in any fixed dimension $d'$ that is greater than
dimension of the affine span of $\p$.
\begin{theorem} 
\label{thm:ss}
Let 
$(G,\p)$ be a framework with a $d$-dimensional affine span in $\RR^{d'}$,
with $d' \ge d+1$.
Then $(G,\p)$ is super stable iff it is prestress stable in $\RR^{d'}$.
\end{theorem}
In particular, this means that 
universal prestress stability is the same as super stability.
The proof of this theorem mainly involves unwinding the various 
definitions with some linear algebra,
and we delay it to Appendix~\ref{sec:ss}.
Note that, unlike the case of prestress stability, 
second order rigidity in $\RR^{d+1}$ does not imply 
super stability~\cite{flaps}.

In this paper, our first main result (Corollary~\ref{cor:u2r})
will be that 
if $(G,\p)$ is universally second-order rigid then
it is super stable.

Since it is known that there are frameworks that are universally rigid but
not super stable~\cite{Connelly-Gortler}, this completely describes the
 relationship between
these properties in the universal setting.
See Figure~\ref{fig:univ}.

\section{Farkas}

Our central argument  will 
rely on a basic Farkas-like
duality principle 
for closed convex cones.

\begin{definition}
Let $Y$ be a closed convex cone in $\RR^m$, for some $m$.
Its dual cone $Y^*$ is defined as 
$\{\omega \in \RR^m \mid \la \omega , \y \ra  \ge  0, \; \forall \, \y \in Y\}$, where $<,>$ is the usual inner product.
Note that $\intr(Y^*)$  consists of the 
$\omega$  such that 
$\la \omega , \y \ra  >  0$ for all non-zero $\y$ in $Y$.
\end{definition}

\begin{lemma}
\label{lem:farkas}
Let $Y$ be a closed convex cone in a finite dimensional real space. 
Let $L$ be a linear space with $Y
\cap L = 0$.  Then there is an element $\omega \in L^\perp$ such that
$\omega \in \intr(Y^*)$.
\end{lemma}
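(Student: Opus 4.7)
The plan is to apply the strict Hahn--Banach separation theorem, after first reducing the cone $Y$ to a compact convex set. Working in the ambient Euclidean space $\RR^m$, let $S^{m-1}$ denote the unit sphere and set $C$ equal to the convex hull of $Y \cap S^{m-1}$. Since $Y$ is closed, $Y \cap S^{m-1}$ is compact; because we are in finite dimensions, the convex hull of a compact set is compact, so $C$ is compact and convex.

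First I would verify that $C \cap L = \emptyset$. Any element of $C$ is a nonnegative combination of points of $Y \cap S^{m-1}$ with coefficients summing to one, and this combination lies in $Y$ since $Y$ is a convex cone; thus $C \subseteq Y$. Consequently $C \cap L \subseteq Y \cap L = \{0\}$. In the nondegenerate setting where $Y$ is pointed, so that $\intr(Y^*)$ is not automatically empty, the origin is not a convex combination of points on the unit sphere lying inside $Y$, so $0 \notin C$, and therefore $C \cap L = \emptyset$.

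Next I would invoke the strict form of the Hahn--Banach separation theorem for the disjoint pair consisting of the compact convex set $C$ and the closed convex set $L$: there exist $\omega \in \RR^m$ and $\alpha \in \RR$ with $\la \omega, c \ra > \alpha \geq \la \omega, l \ra$ for every $c \in C$ and every $l \in L$. Since $L$ is a linear subspace closed under scaling, the bound on $L$ forces $\la \omega, l \ra = 0$ for all $l \in L$, so $\omega \in L^\perp$ and $\alpha \geq 0$. Now for any nonzero $y \in Y$, the unit vector $y/|y|$ lies in $Y \cap S^{m-1} \subseteq C$, giving $\la \omega, y \ra > 0$. By the characterization of $\intr(Y^*)$ recorded immediately before the lemma, this says precisely that $\omega \in \intr(Y^*)$.

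The main obstacle I anticipate is the edge case where $Y$ fails to be pointed, that is, where $Y$ contains a line. In that situation $\intr(Y^*)$ is genuinely empty and the conclusion cannot hold regardless of $L$; for instance $L = 0$ is allowed by the hypotheses but defeats the conclusion. So a clean write-up should either assume $Y$ is pointed or deduce it from the intended context. For the applications in this paper, $Y$ arises as a pointed convex cone (typically generated by finitely many vectors associated to rigidity constraints), so this hypothesis is automatically in force and the separation argument above goes through verbatim.
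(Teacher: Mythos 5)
Your proof is correct, but it takes a genuinely different route from the paper's. The paper argues by contrapositive entirely in the dual space: assuming $L^\perp$ misses $\intr(Y^*)$, it weakly separates $L^\perp$ from $\intr(Y^*)$ by a hyperplane $\y^\perp$ supporting $Y^*$, and then invokes the bipolar identity $(Y^*)^* = Y$ (this is where closedness of $Y$ enters) to land a nonzero $\y$ in $Y\cap L$. You instead work directly in the primal space: you compactify $Y$ as $C=\mathrm{conv}(Y\cap S^{m-1})$ (closedness of $Y$ entering via compactness of $Y\cap S^{m-1}$), strictly separate the compact convex $C$ from the subspace $L$, and read off $\omega\in L^\perp$ with $\la \omega,\y\ra>0$ on $Y\setminus\{0\}$. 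Your version is a direct proof that avoids the bipolar theorem; the paper's is shorter and makes the Farkas-type duality explicit. You are also right to flag the pointedness issue: as literally stated the lemma fails when $Y$ contains a line (take $L=0$), since then $\intr(Y^*)=\emptyset$; the paper's own proof silently needs $\intr(Y^*)\neq\emptyset$ at the separation step for the same reason. In the application the cone $Y=\{R(\p',\p')\}$ has all coordinates of the form $(\p'_i-\p'_j)^2\ge 0$, so it sits in the nonnegative orthant and is pointed --- though your parenthetical that it is ``generated by finitely many vectors'' is not quite right (it is a projection of the cone of Euclidean distance matrices, shown closed by citation, not a polyhedral cone); only pointedness is needed, and it would be worth spelling out the one-line argument that pointedness forces $0\notin C$.
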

\begin{proof}
We prove the contrapositive: Suppose  there is no such interior  element $\omega$. 
Then $L^\perp$
is disjoint from the interior of $Y^*$. Thus there is a ``supporting''
hyperplane $Z^*$ for $Y^*$ such that
$Z^* \supset L^\perp$.

To see this we note that $\intr({Y^*})$ is convex and is disjoint
from $L^\perp$.  Thus we can use the weak separation theorem to find a
hyperplane $Z^*$ (through the origin), that weakly separates $L^\perp$
from $\intr({Y^*})$ and thus from $Y^*$.  This $Z^*$ is a supporting
hyperplane for $Y^*$.  Meanwhile, since $L^\perp$ is linear, it must be
contained in $Z^*$.

We can associate with the supporting hyperplane
$Z^* =: \y^\perp$ a non-zero dual linear functional (ie. a primal vector),
$\y$, 
such that 
$\la Y^*, \y \ra \geq 0$.
Thus by definition of a dual cone, $\y \in (Y^*)^* =
cl(Y) = Y$. 
Also we must have $\y \in L$.
Thus $Y \cap L \neq 0$. \qed
 \end{proof}

The above proof is based on~\cite{Lovasz-notes}; 
we include it here
for completeness.

\section{Universal second-order rigidity}

We are interested in universal second-order rigidity, but it will be helpful
to look at the case where some subset of the vertices are pinned to first-order.

\begin{definition}
Given framework $(G,\p)$ in $\RR^D$.  Let $G_0$ be a subset of the
vertices of $G$.  We regard
$(G, G_0, \p)$ as the framework, where the vertices in $G_0$
are pinned ``to first order''
as described next.

We say that $(G, G_0, \p)$ is pinned universally second-order rigid
if there is no
second-order flex $({\p}',{\p}'')$ of $(G,\p)$ in $\RR^D$, with
$\p'$  non-trivial as a first-order flex in $\RR^D$ and
with 
$\p_j' = 0$ when $\p_j$ corresponds to a vertex 
in $G_0$.
Note that there is no pinning constraint imposed on $\p''$.
\end{definition}

We should be careful to realize that we assume the ``pinned" vertices
are only pinned to the first-order.  For example, Figure \ref{fig:Y}
is not pinned universally second-order rigid, when  the indicated
vertices are pinned only to the first-order.  In 
this example, there is a $\p'$
moving the
central vertex orthogonal to the plane in
three-space, and there is a corresponding $\p''$
of the pinned vertices pointing towards the central vertex.
When bars 
are inserted between
the first-order pinned vertices, then the  framework becomes 
pinned universally
second-order rigid. These inserted bars can even be subdivided
as long as the subdividing vertices are also pinned to first order.

\begin{figure}[ht]
    \begin{center}
        \includegraphics[width=0.4\textwidth]{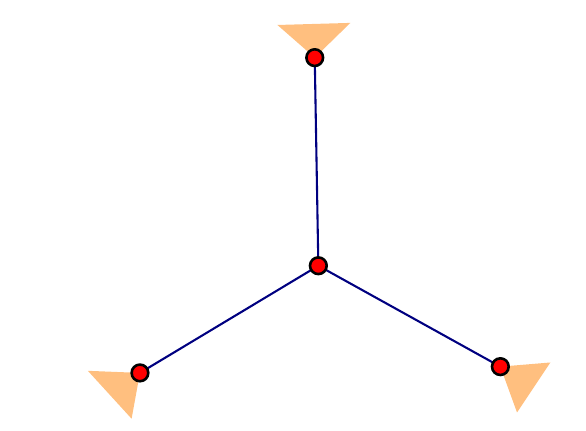}%
        \end{center}
    \caption{This framework is not universally second-order rigid when
the outer three vertices are only pinned to first order.}
    \label{fig:Y}
    \end{figure}

Although this in not, on its own, the most natural concept, 
pinned universal second-order rigidity
will be exactly what we need, using duality, 
to establish the existence 
of an equilibrium stress matrix  that is positive definite when
acting on an appropriate subspace. When this construction is applied
to frameworks with appropriately chosen pins, we will
be able
to reason about  prestress stability.

\begin{definition}
\label{def:pconfig}
Given a graph $G$, a dimension $d$, and a chosen subset of 
``pinned'' vertices $G_0$
we define
\[ \CC(d,G_0) := \{ \p' \in \R^{nd} \mid \p'_j =0,  
\forall \, \p_j \,\, \text{corresponding to vertices in} \,\, G_0 \}. \]
\end{definition}

\begin{definition}
Given a framework $(G,\p)$. Let 
\[Y:=\{R(\p',\p') \mid \p' \in \CC(D,G_0) \} \subset \RR^e\]
where $R(\cdot,
\cdot)$ is the rigidity form for the graph $G$, and $D$ is sufficiently large.

Let $L$ be the linear space
defined as 
the linear span of the 
columns of $R({\p})$, the rigidity matrix.
\end{definition}

\begin{lemma}
\label{lem:kclosed}
$Y$ is a closed convex cone.
\end{lemma}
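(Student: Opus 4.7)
The plan is to verify cone, convexity, and closedness in turn.

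\emph{Cone}: by homogeneity, $R(\alpha\p',\alpha\p')=\alpha^{2}R(\p',\p')$, and $\CC(D,G_0)$ is closed under scaling, so $Y$ is invariant under multiplication by non-negative reals.

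\emph{Convexity}: since $Y$ is a cone, it suffices to show $Y+Y\subseteq Y$. Given $\p'_1,\p'_2\in\CC(D,G_0)$, I stack them into the configuration $\p''$ with $\p''_i:=(\p'_{1,i},\p'_{2,i})\in\R^{2D}$. A direct calculation gives $|\p''_i-\p''_j|^{2}=|\p'_{1,i}-\p'_{1,j}|^{2}+|\p'_{2,i}-\p'_{2,j}|^{2}$ on each edge, so $R(\p'',\p'')=R(\p'_1,\p'_1)+R(\p'_2,\p'_2)$. Pinning is preserved because stacking two zero vectors yields zero. Since the linear span of $\{\p''_i\}$ has dimension at most $n$ and $D$ is sufficiently large, an isometric linear embedding returns $\p''$ to $\CC(D,G_0)$ without changing the rigidity form value.

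\emph{Closedness}: suppose $\y_k=R(\p'_k,\p'_k)\to\y$. The squared edge-lengths are uniformly bounded, but the $\p'_k$ themselves need not be, so some normalization is required. I work componentwise on $G$: any connected component containing a $G_0$-vertex is already anchored at the origin, so the triangle inequality and connectivity bound the remaining vertices; any component disjoint from $G_0$ I translate to place one of its vertices at the origin, which preserves $R(\p'_k,\p'_k)$ and violates no pinning constraint; isolated vertices not in $G_0$ can be set to zero. After this, $\{\p'_k\}$ is uniformly bounded, so a subsequence converges to some $\p'\in\CC(D,G_0)$, and continuity of the rigidity form gives $R(\p',\p')=\y\in Y$.

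I expect the closedness step to be the main obstacle, since images of quadratic maps are not closed in general. The argument hinges on the translation freedom within free components, which is exactly the gauge symmetry of $R(\cdot,\cdot)$ that lets us normalize an unbounded sequence into a compact set. Convexity, by contrast, comes nearly for free once one observes that a sufficiently large ambient dimension supplies enough room for the stacking construction.
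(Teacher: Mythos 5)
Your proof is correct, but it takes a genuinely different route from the paper's. The paper does not argue from scratch: it introduces the unpinned cone $Y^+=\{R(\p',\p')\mid \p'\in\R^{nD}\}$, identifies it with the projection of the Euclidean distance matrix cone onto the edge coordinates, cites the theorem (from the reference \cite{shadow}) that this projection is closed and convex, and then recovers $Y$ by intersecting $Y^+$ with the linear subspace where the lengths of edges joining $G_0$-vertices vanish. Your argument is instead self-contained: the stacking-into-$\R^{2D}$ trick followed by a linear isometry back into $\R^D$ is exactly the standard proof of convexity of the projected distance cone, and your closedness argument --- normalizing each connected component of $G$ by a translation (which is harmless off $G_0$ and unnecessary on components meeting $G_0$, since those are already anchored at the origin) so that bounded edge lengths force a bounded configuration, then extracting a convergent subsequence --- is essentially the proof of the cited closedness theorem, adapted to carry the pinning constraint along. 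What the paper's route buys is brevity; what yours buys is independence from the external citation, and also a cleaner treatment of the pinning: the paper's identification of $Y$ with $Y^+$ intersected with a linear subspace of $\R^e$ is only literally available when the relevant $G_0$-pairs are actually edges of $G$ (otherwise ``zero length on $G_0$--$G_0$ edges'' does not pin the $G_0$-vertices to a common point, let alone to the origin), whereas your componentwise normalization handles an arbitrary pinned set $G_0$ directly. Both proofs are valid; yours is longer but arguably more robust.
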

\begin{proof}
Let $Y^+$ be the set $\{R(\p',\p') \mid \p' \in \R^{nD}\}$.
This set is isomorphic to the projection of the convex cone of 
Euclidean distance matrices on to the coordinates corresponding
to the edges of the graph.
From~\cite[Theorem~3.2]{shadow}, 
the set $Y^+$ is a closed convex cone.
Our set $Y$ is obtained by intersecting $Y^+$ with the linear subspace
where the edge lengths between the vertices in $G_0$ are all $0$.
This too must be a closed convex cone.
\qed \end{proof}

\begin{lemma}
\label{lem:nint}
Suppose that the vertices of $G_0$ in $\p$ have an affine span that
agrees with the affine span of $\p$.
Let $(G,G_0,\p)$ be pinned universally second-order rigid.
Then there is no 
non-trivial intersection of $L$ and $Y$.
\end{lemma}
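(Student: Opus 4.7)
My plan is to prove the contrapositive: if there is a nonzero $\xi \in L \cap Y$, then $(G,G_0,\p)$ fails to be pinned universally second-order rigid. Such a $\xi$ gives, by the definition of $Y$, some $\p' \in \CC(D,G_0)$ with $R(\p',\p') = \xi$, and by the definition of $L$ as the column span of $R(\p)$, some $\q \in \RR^{nD}$ with $R(\p,\q) = \xi$. Setting $\p'' := -\q$, the pair $(\p',\p'')$ then already satisfies the second-order compatibility equation (\ref{eqn:vsecond2}). What is missing is the first-order condition $R(\p,\p') = 0$ (so that $\p'$ is a bona fide infinitesimal flex) and the guarantee that $\p'$ is non-trivial as such a flex.

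To arrange the first-order condition, I would exploit the fact that pinned universal second-order rigidity concerns $\RR^{D'}$ for $D'$ arbitrarily large and lift everything to $\RR^{2D}$ using orthogonal coordinate blocks. Concretely, set $\hat{\p}_i := (\p_i,0)$ and $\hat{\p}'_i := (0,\p'_i)$. Because the two blocks are orthogonal, $R(\hat{\p},\hat{\p}') = 0$ automatically, so $\hat{\p}'$ becomes a genuine first-order flex of $(G,\hat{\p})$. The squared edge-lengths of $\p'$ are unchanged by the embedding, so $R(\hat{\p}',\hat{\p}') = \xi$, and taking $\hat{\p}''_i := (-\q_i,0)$ preserves $R(\hat{\p},\hat{\p}'') = -\xi$, so the second-order equation survives the lift. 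The pinning is preserved since $\hat{\p}'_j = 0$ whenever $\p'_j = 0$.

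The step I expect to be the main obstacle, and where the affine-span hypothesis on $G_0$ is genuinely used, is verifying that $\hat{\p}'$ is non-trivial as a first-order flex of $(G,\hat{\p})$ in $\RR^{2D}$. I would assume for contradiction that $\hat{\p}'_i = A\hat{\p}_i + b$ for some skew-symmetric $2D \times 2D$ matrix $A$ and some $b \in \RR^{2D}$. Writing $A$ in $2 \times 2$ blocks and reading off the bottom block of this identity expresses $\p'_i$ as a fixed affine function $f$ of $\p_i$. The pinning condition forces $f$ to vanish on $\{\p_j : j \in G_0\}$, and the hypothesis that these vertices affinely span the whole configuration then forces $f$ to vanish on every $\p_i$. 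Hence $\p' = 0$ and $\xi = R(\p',\p') = 0$, contradicting $\xi \neq 0$. Once non-triviality is established, $(\hat{\p}',\hat{\p}'')$ is a pinned second-order flex that witnesses the failure of pinned universal second-order rigidity, which is the desired contradiction.
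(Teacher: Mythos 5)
Your proposal is correct and follows essentially the same route as the paper: both argue the contrapositive, use the freedom of a large ambient dimension to place $\p'$ orthogonally to $\langle \p \rangle$ so that the first-order equation $R(\p,\p')=0$ holds for free while $R(\p',\p')$ and the pinning are preserved, and both derive non-triviality from the fact that a trivial (affine) flex vanishing on the affinely spanning pinned set $G_0$ must vanish identically. Your explicit lift to $\RR^{2D}$ with orthogonal coordinate blocks is just a concrete implementation of the paper's ``without loss of generality, assume $\p'$ is orthogonal to the affine span of $\p$.''
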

\begin{proof}
Suppose that  $\y$ was  some non-trivial intersection point. 
Since $\y \in L$, then we have $\y = -R(\p,\p'')$ for some 
$\p''$. Since $\y \in Y$, then we have $\y= R(\p',\p')$ for some non-zero
$\p' \in \CC(D,G_0)$. Since  $D$ is a sufficiently large dimension,
 we can, without loss of generality,
assume that $\p'$ 
is orthogonal to the
$d$-dimensional affine span of 
$\p$. 
Thus $\p'$ is a first-order flex for $(G,\p)$.
Since $\p'$ is non-zero, but is zero on a set of vertices with a full
affine span, then this flex is non-trivial in $\RR^D$.
Thus 
$(\p',\p'')$ is a non-trivial second-order flex. This contradicts the
assumed pinned  universal second-order rigidity.
\qed \end{proof}

\begin{lemma}
\label{lem:stress}
Any $\omega \in L^\perp$ is an equilibrium stress vector for $(G,{\p})$.
Any $\omega \in \intr(Y^*)$ must correspond to an $\Omega$ that is 
positive definite on $\CC(1,G_0) \subset \RR^n$

\end{lemma}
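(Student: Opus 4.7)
The plan is to treat the two assertions separately, since they are essentially independent, and in both cases the heart of the argument is unwinding definitions.

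For the first assertion, I would observe that $L^\perp$ is by construction the left nullspace of the rigidity matrix, so $\omega \in L^\perp$ is the statement $R(\p)^t \omega = 0$. I would then compute the $(l,k)$-coordinate of $R(\p)^t\omega$: only edges incident to vertex $l$ contribute, and, using $\omega_{ij}=\omega_{ji}$ together with the convention that non-edges carry zero stress, this coordinate equals $\sum_{j:\{l,j\}\in E} \omega_{lj}(\p_l-\p_j)_k$. Setting it to zero for every ambient coordinate $k$ is exactly the per-vertex equilibrium condition $\sum_j \omega_{lj}(\p_l-\p_j)=0$, so $\omega$ is an equilibrium stress. (This essentially reproves the converse of Proposition~\ref{prop:cokernel}.)

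For the second assertion, the key device is a one-dimensional lift into the ambient space $\R^D$. Given a nonzero $\v \in \CC(1,G_0)$, I would set $\p'_i := v_i\,\e_1$, where $\e_1$ is any fixed unit vector in $\R^D$. Then $\p' \in \CC(D,G_0)$ because $v_j=0$ for $j\in G_0$, so $\y := R(\p',\p') \in Y$. A direct computation, together with the fact that the matrix of $E_\omega$ along a single coordinate direction is exactly $\Omega$, gives
\[
\la \omega, \y \ra \;=\; \sum_{\{i,j\}\in E} \omega_{ij}(v_i-v_j)^2 \;=\; \v^t\,\Omega\,\v.
\]
Whenever $\y \neq 0$, the hypothesis $\omega \in \intr(Y^*)$ forces $\la\omega,\y\ra>0$, and hence $\v^t\Omega\v>0$, which is the desired positive definiteness.

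The one place where I anticipate friction is the degenerate case $\y=0$ with $\v \ne 0$, which forces $v_i=v_j$ along every edge and combines with $v_j=0$ on $G_0$ to make $\v$ vanish on every component of $G$ meeting $G_0$. Under the standing assumption (automatic in the situations where this lemma is applied, where the pin set $G_0$ spans the affine hull of $\p$ and the graph is connected enough to propagate from $G_0$) that $G_0$ meets every component of $G$, this degenerate case cannot occur and the conclusion follows. The main thing to be vigilant about is precisely this covering hypothesis: without it, one only obtains positive semidefiniteness of $\Omega$ on $\CC(1,G_0)$ rather than the stated positive definiteness.
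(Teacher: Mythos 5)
Your proof is correct and follows essentially the same route as the paper's, which simply unwinds the definitions of $L^\perp$, $\intr(Y^*)$, and the rigidity form (citing Proposition~\ref{prop:cokernel} for the cokernel characterization of equilibrium stresses), just in more detail. Your closing observation about the degenerate case $R(\p',\p')=0$ with $\p'\neq 0$ is a real point that the paper's one-line proof glosses over: as literally stated the second assertion needs $G_0$ to meet every connected component of $G$, and this does hold wherever the lemma is invoked, since a pinned universally second-order rigid framework with $G_0$ affinely spanning $\langle \p \rangle$ cannot have a component disjoint from $G_0$ (translating such a component yields a non-trivial second-order flex).
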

\begin{proof}
The vector $\omega$ must be in the cokernel of $R({\p})$
and thus must 
be an equilibrium stress vector for $(G,\p)$
(Proposition~\ref{prop:cokernel}).
Any $\omega \in \intr(Y^*)$ has the property that for any non-zero
$\p' \in \CC(D,G_0)$ 
we have
\ba \sum_{i<j}\omega_{ij}(\p'_{i}-\p'_{j})^2 >0\ea
\qed \end{proof}

\begin{theorem}
\label{thm:omega}
Let $(G,G_0,\p)$ be pinned universally second-order rigid.
Then it must have
an equilibrium stress $\Omega$ that 
is positive definite on $\CC(1,G_0)$.
\end{theorem}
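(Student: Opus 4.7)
The plan is to chain together the four lemmas already assembled, with the Farkas Lemma~\ref{lem:farkas} doing the main work. The cone $Y$ and subspace $L$ have been set up precisely so that the hypotheses of Lemma~\ref{lem:farkas} line up: $Y$ is a closed convex cone by Lemma~\ref{lem:kclosed}, and pinned universal second-order rigidity forces $Y \cap L = \{0\}$ by Lemma~\ref{lem:nint}, using the assumption that the pinned vertices $G_0$ span the affine hull $\langle \p \rangle$ of $\p$. I would take that span assumption either as part of the ambient setup of the theorem or as a normalization to be imposed before the proof begins, since it is the only hypothesis in the chain that is not automatic.

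With those in place, I would apply Lemma~\ref{lem:farkas} to produce an $\omega \in L^\perp$ lying in $\intr(Y^*)$. Lemma~\ref{lem:stress} then packages the two properties of $\omega$ that are needed. First, membership of $\omega$ in $L^\perp$ (the cokernel of $R(\p)$) means, by Proposition~\ref{prop:cokernel}, that $\omega$ is an equilibrium stress for $(G,\p)$. Second, membership of $\omega$ in $\intr(Y^*)$ means that $\sum_{i<j}\omega_{ij}(\p'_i - \p'_j)^2 > 0$ for every non-zero $\p' \in \CC(D,G_0)$. Since the stress energy has matrix $I^D \otimes \Omega$ with respect to the standard basis of $\R^{nD}$, positive definiteness of that quadratic form on $\CC(D,G_0)$ is equivalent, by the block-diagonal structure, to positive definiteness of $\Omega$ on $\CC(1,G_0) \subset \R^n$, which is the desired conclusion.

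The ``hard part'' is not in this theorem proper; it has already been discharged. The nontrivial ingredients are the closedness of the projected Euclidean-distance cone in Lemma~\ref{lem:kclosed} (which leans on~\cite{shadow}) and the trick in Lemma~\ref{lem:nint} of using the arbitrarily large ambient dimension $D$ to arrange that any candidate $\p'$ is orthogonal to $\langle \p \rangle$, making it a legitimate first-order flex rather than merely a solution of the second-order equation. Given those, Theorem~\ref{thm:omega} is essentially a one-paragraph assembly, and the only remaining care is to confirm that the affine-span hypothesis of Lemma~\ref{lem:nint} holds wherever the theorem is subsequently applied.
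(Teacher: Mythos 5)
Your proposal is correct and follows exactly the paper's own argument, which simply cites Lemmas~\ref{lem:nint}, \ref{lem:kclosed}, \ref{lem:farkas}, and \ref{lem:stress} in the same chain you describe. Your additional remarks --- flagging the affine-span hypothesis needed for Lemma~\ref{lem:nint} and spelling out how positive definiteness of $E_\omega$ on $\CC(D,G_0)$ descends to positive definiteness of $\Omega$ on $\CC(1,G_0)$ via the $I^D\otimes\Omega$ block structure --- are accurate elaborations of details the paper leaves implicit.
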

\begin{proof}
This follows immediately using 
Lemmas~\ref{lem:nint},
\ref{lem:kclosed},
\ref{lem:farkas}, and \ref{lem:stress}.
\qed \end{proof}

We can now obtain our first main result about (unpinned) universal
second-order rigidity.

\begin{corollary}
\label{cor:u2r}
If $(G,{\p})$ 
is universally second-order rigid and has a $d$-dimensional affine span
then 
it must have an equilibrium stress
 matrix $\Omega$ that is positive semidefinite (acting on $\RR^n$)
and
of rank $n-d-1$. Thus it must be super stable/universally prestress stable.
\end{corollary}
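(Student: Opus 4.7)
The plan is to reduce this corollary to the pinned version (Theorem~\ref{thm:omega}) by pinning a carefully chosen set $G_0$, and then to unpack what the resulting equilibrium stress matrix looks like on all of $\R^n$.

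First I would choose $G_0 \subseteq V(G)$ to be any set of $d+1$ vertices whose positions affinely span $\langle \p \rangle$; such a set exists by the hypothesis on the affine span. Universal second-order rigidity of $(G,\p)$ immediately yields pinned universal second-order rigidity of $(G,G_0,\p)$, since any witness of the pinned failure is automatically a witness of the unpinned failure. Consequently, Theorem~\ref{thm:omega} produces an equilibrium stress matrix $\Omega$ for $(G,\p)$ that is positive definite on the subspace $\CC(1,G_0) \subset \R^n$, which has dimension $n-d-1$.

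Next I would analyze $\Omega$ on all of $\R^n$. Any equilibrium stress matrix annihilates the $(d+1)$-dimensional ``trivial kernel'' $K$ spanned by the all-ones vector (by the row-sum condition) together with the $d$ coordinate vectors of $\p$ (by the equilibrium condition). Using the affine-spanning property of $G_0$, any element of $K$ that vanishes on every vertex of $G_0$ must be zero, so $K \cap \CC(1,G_0) = 0$. A dimension count then forces $\R^n = K \oplus \CC(1,G_0)$, and decomposing $\x = k + \v$ with $\Omega k = 0$ gives $\x^t \Omega \x = \v^t \Omega \v \geq 0$. Hence $\Omega$ is PSD on $\R^n$ with $\ker \Omega = K$, so of rank exactly $n-d-1$, matching the upper bound of Proposition~\ref{prop:maxrank}.

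To conclude super stability we still need to rule out the edge directions of $\p$ lying on a conic at infinity of $\langle \p \rangle$. By Proposition~\ref{prop:conic}, such a conic would yield a non-congruent $(G,\q)$ with $\q$ an affine image of $\p$ sharing the same edge lengths. But universal second-order rigidity implies universal local rigidity, which Remark~\ref{rem:globalUR} upgrades to universal global rigidity, ruling this out. Super stability then delivers universal prestress stability through Theorem~\ref{thm:ss}.

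The heavy lifting is packaged in Theorem~\ref{thm:omega}; the one independent piece of bookkeeping is verifying both that the trivial kernel $K$ lies in $\ker \Omega$ (which rides on $\Omega$ being an equilibrium stress matrix \emph{for} $\p$) and that $K \cap \CC(1,G_0) = 0$ (which is where the affine-spanning choice of $G_0$ is essential). Neither should be hard, but both must be made explicit for the dimension count to land cleanly.
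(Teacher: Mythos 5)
Your proof is correct and follows essentially the same route as the paper: pin a $(d+1)$-vertex affinely spanning set $G_0$, invoke Theorem~\ref{thm:omega}, and rule out a conic at infinity via universal (global) rigidity together with Proposition~\ref{prop:conic}. The only cosmetic difference is that you establish positive semidefiniteness by explicitly exhibiting $\ker\Omega$ as the trivial kernel $K$ and decomposing $\R^n = K \oplus \CC(1,G_0)$, whereas the paper reaches the same conclusion more tersely from the maximal-rank bound of Proposition~\ref{prop:maxrank}.
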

\begin{proof}
Pick any subset of $d+1$ vertices with a full $d$-dimensional affine span 
in the configuration $\p$
to
be the subset $G_0$.
If $(G,{\p})$ is universally second-order rigid, then 
$(G,G_0,\p)$ must be pinned universally second-order rigid.
From Theorem~\ref{thm:omega}, it must have
an equilibrium stress $\Omega$ that is positive definite 
on $\CC(1,G_0)$.
Since $G_0$ is of size $d+1$, then $\Omega$ must have rank at least
$n-d-1$, and as this is maximal (Proposition~\ref{prop:maxrank}),
it must have rank equal to $n-d-1$, and
also must be positive semidefinite.

Since it is universally second-order rigid it is universally 
(globally) rigid.
Thus it cannot have any non-congruent frameworks with the same edge
lengths, let alone one that arises through an affine transform.
So from Proposition~\ref{prop:conic}
it cannot have its edge directions on a conic at infinity.
Thus it is super stable and, from Theorem~\ref{thm:ss}, also
universally prestress stable.
\qed \end{proof}

\subsection{Generalizations}

There are a few directions for generalizing
Corollary \ref{cor:u2r}. 

A  framework $(G,\p)$ with a $d$-dimensional affine span
is called
\emph{dimensionally rigid} if there is no other framework
that has the same edge lengths and has
an affine span of dimension greater than 
$d$~\cite{Alfakih-dim-rigidity}.
If a framework is dimensionally but not universally rigid, then it is
not locally rigid in $\RR^D$, but all other equivalent frameworks can
be obtained from $\p$, through 
the restrictions of an affine transform acting on 
$\RR^D$~\cite{Alfakih-fields}. 
(The edge directions of $(G,\p)$ lie on a conic at infinity.)
Like universal rigidity, 
dimensional rigidity can often be certified by a single PSD equilibrium stress
matrix $\Omega$ of rank $n-d-1$ 
but such a single-matrix certification does not
always exist~\cite{Connelly-Gortler}. 

It is easy to see that Corollary~\ref{cor:u2r} also provides a
characterization when such a single certification of dimensional
rigidity exists.  Suppose that in all dimensions, there are no second-order 
flexes other than those where $\p'$ arises from an affine
transform of $\p$.  Then following the proof of the corollary, we pick
any subset of $d+1$ vertices with a full $d$-dimensional affine span
in the configuration $\p$ to be a subset $G_0$.  Then $(G,G_0,\p)$
will be pinned universally second-order rigid, and duality will give
us our desired $\Omega$.

We can also consider a general PSD feasibility problem~\cite{shadow}
\[
F := \{ X \in \CS^n_+ : \CM(X)=\b\}
\]
where $\CS^n_+$ is the cone of positive semidefinite $n$-by-$n$ matrices
and $\CM$ is a linear mapping from $\CS^n$ to $\RR^e$, for some $e$, 
and $\b \in \RR^e$.
Let $r$ be the highest rank among the solution set $F$,
obtained, say, by some solution $X_0$.
If $r=n$, this problem has a positive definite feasible solution, 
and 
we say that the problem has a singularity degree of zero.

Suppose that $r<n$,
then (due to a Farkas duality)
it must have a (dual) PSD matrix $\Omega$ such that 
$\langle \Omega, X \rangle=0$ for all 
$\{ X  : \CM(X)=\b\}$. If we can find such an $\Omega$ of rank
$n-r$, then we say that the problem 
has a singularity degree of one. The pair,
$(X_0,\Omega)$, forms a certificate
that the maximal rank for this problem is indeed $r$.

Otherwise, the singularity degree is higher, and can be found by
appropriately iterating the above process, called facial 
reduction~\cite{Borwein-Wolkowicz,Connelly-Gortler}. Higher singularity
degrees can result in numerical instability.

Corollary \ref{cor:u2r} gives a characterization for
singularity degree one, when $\CM$ corresponds to 
computing 
the squared edge
length measurement over the edges of a graph $G$, given the Gram matrix
of a configuration $\p$.
(Here $r$ would 
correspond with $d+1$).
Can we generalize this to get a characterization for a general
SDP feasibility problem to have singularity degree one?

Indeed, most of the ideas generalize very naturally.
Let us begin with 
the appropriate
generalization to Equation (\ref{eqn:vsecond2}).
Suppose we can factor $X_0 = P^t P$, where $P$ is 
a rank $r$, $n$-by-$n$ matrix. 

The analogue of the linear space 
$R(\p,\p'')$, where $\p''$ is allowed to be any configuration, 
is then
$\CM(P^t P''+(P'')^t P)$
where $P''$ 
is allowed
to be any n-by-n matrix. 
Interestingly, 
the space spanned by $P^t P''+(P'')^t P$ is, in fact,
$\tan(X_0,\CS^n_+)$ ,
the
tangent to $\CS^n_+$ at $X_0$ 
(see~\cite{Pataki} for definitions).
And thus our analogous linear space is  
$\CM(\tan(X_0,\CS^n_+))$.
The analogue to an equilibrium stress vector is a  vector
$\omega \in \RR^e$ such that $\CM^*(\omega)P^t=0$, where $\CM^*$ is the adjoint
mapping back up into $\CS^n$.

Let $F_0$ be the face of $X_0$ in $\CS^n_+$.  Let us define a
``complement face'', $\bar{F}_0$, to be any face of $\CS^n_+$ that includes
some matrix of rank $n-r$, and has no non-trivial intersection with
$F_0$. With these defined, 
the general analogue to our cone $R(\p',\p')$ is 
$\CM(\bar{F}_0)$.

Let us assume that $\CM(\bar{F}_0)$ happens to be closed. Then the
main ideas of this section allow us to conclude that if the
constructed linear space has only the trivial intersection with the
constructed cone, then our PSD feasibility problem has singularity degree one.

Unfortunately, in the case that  
$\CM(\bar{F}_0)$ is 
not closed, then 
the Farkas Lemma~\ref{lem:farkas} can not be applied and so it is less
clear if we can reason about singularity degree one.

\section{Triangulated convex polygons with holes}

We have two goals for this section and the next.  One goal is
to prove that arbitrary triangulations of certain polyhedral spaces
are universally rigid, with some of the vertices pinned to first order.  Another goal
is to prove that arbitrary triangulations of certain polyhedral spaces
are prestress stable in $\RR^3$.  These two goals
are closely related.

A motivating example for the first goal is when the polyhedral space
is the underlying space of an embedded simplicial complex where some
of its vertices are pinned to the first-order.  But we will require
that the space 
satisfies certain geometric conditions.  For example, a two-dimensional
convex planar polygon with its boundary vertices pinned to first order, 
and some
``holes" removed from its interior, will be universally second-order
rigid under any triangulation, 
if the holes are placed correctly.  Figure
\ref{fig:good-holes}(a) is a case when the holes are properly placed,
whereas in Figure \ref{fig:good-holes}(b) the holes are not properly
placed.

\begin{figure}[ht]
    \begin{center}
        \includegraphics[width=0.7\textwidth]{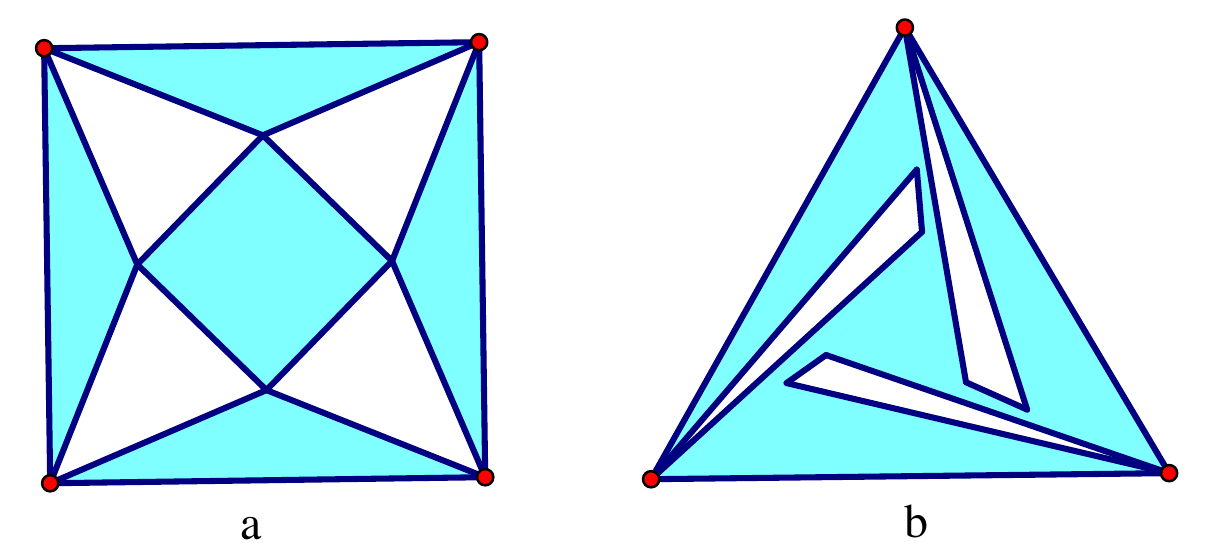}%
        \end{center}
    \caption{When the corner vertices of Figure (a) are pinned, and any triangulation of the blue two-dimensional polyhedron is taken, the framework is super stable.  For Figure (b) there is a triangulation that is a finite mechanism in $3$-space.}
    \label{fig:good-holes}
    \end{figure}

\begin{definition}\label{def:spider-set} 
Following \cite{Hudson} a \emph{polyhedron} $X=|K|$ is the underlying
space of a simplicial complex $K$ in some Euclidean space and $K$ is
called a triangulation of $X$. If $X$ is a polyhedron, with a
subpolyhedron $X_0 \subset X$, we say that $(X, X_0)$ is a
\emph{spider set} if for every point $\x \in X-X_0$, there is a
framework $(G_{sp}, \q)$ with $\x$ a vertex of $\q$, whose edges lie in
$X$, and there is 
an equilibrium stress that is positive on all the edges that have  at least
one vertex in $X-X_0$ (and no zero length edges).  We call $(G_{sp},\q)$ a
\emph{spider tensegrity} corresponding to $\x$ and the spider set $(X,
X_0)$
\end{definition}

Note that a polyhedron is allowed to be of ``mixed dimension''.

The definition of a spider set does not depend on a
particular triangulation.  For example, when a polyhedral set $X$ is a
subset of a convex planar polygon $F$, with $X_0$ the boundary of the 
polygon,
the question as to whether it is a spider set can be determined by
considering the infamous Maxwell-Cremona correspondence as discussed
in \cite{Connelly-rigidity} and shown in Figure \ref{fig:holy-face}.
Suppose that $P_F$, a convex polytope,  projects, orthogonally by
the projection $\pi$, onto the polygon $F$ which coincides with the
``bottom'' two-dimensional 
face of $P_F$.  Let $P_F^{(1)}$ be the one-skeleton of $P_F$.
Let $X$ consist of the projection  $\pi(P_F^{(1)})$ and the projection of
any chosen subset of
the two-dimensional top faces of $P_F$. 
Then such an
$(X,X_0)$ must be a spider set.  
To see this, if $\x$ is 
the projection of a vertex of $P_F$, 
then the projection of the
one-skeleton serves as the spider tensegrity for $X$.  
If $\x$ is
the projection of a point in 
the relative interior of one of the faces of $P_F$, it
is easy to lift that point  slightly above the convex hull of the 
other vertices, adjusting the polytope  $P_F$.  If $\x$ is the projection 
of a point in the relative
interior of an edge of $P_F$, it is easy to adjust the stress to
accommodate the subdivided edge.

\begin{theorem} 
\label{thm:spider}
Let $(K, K_0)$ be any triangulation of a spider set $(X,X_0)$ 
and $(G,
G_0, \p)$ the corresponding framework of the one-skeleton of
$(K,K_0)$,  where $G_0$ corresponds to those vertices of $G$ that are
in $X_0$.  Then $(G, G_0,{\p})$ is universally second-order rigid when
the vertices of $G_0$ are pinned to the first-order.
\end{theorem}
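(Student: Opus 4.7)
The plan is to prove the contrapositive. Suppose for contradiction $(G,G_0,\p)$ admits a non-trivial pinned second-order flex $(\p',\p'')$ in $\RR^D$, so that $\p'_j = 0$ at every vertex $j$ in $G_0$ yet $\p'$ is not any trivial flex. Extend both $\p'$ and $\p''$ piecewise linearly to all of $X$ using the barycentric coordinates on each simplex of $K$. Since $\p'=0$ at every vertex of $K_0$, piecewise linearity forces $\p' \equiv 0$ on $X_0$, while non-triviality forces $\p' \neq 0$ at some vertex of $K$ in $X-X_0$.

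The key algebraic step is an intra-simplex identity: for any two points $\x,\y$ in a common simplex of $K$,
\[
(\x-\y)\cdot\bigl(\p''(\x)-\p''(\y)\bigr) \;=\; -\bigl(\p'(\x)-\p'(\y)\bigr)^2.
\]
Writing $\x-\y = \sum_\ell \gamma_\ell \p_{i_\ell}$ over the simplex's vertices with $\sum_\ell \gamma_\ell = 0$, expanding the left-hand side, and using the edge-wise second-order equations to rewrite the cross terms yields the identity after the telescoping cancellation $\sum_{\ell<m}\gamma_\ell\gamma_m(|\p'_{i_\ell}|^2+|\p'_{i_m}|^2) = -\sum_\ell \gamma_\ell^2|\p'_{i_\ell}|^2$. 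For a straight segment in $X$ from $\q_i$ to $\q_j$ that crosses simplex boundaries, subdivide it into pieces $\x_0=\q_i,\x_1,\dots,\x_m=\q_j$ with $\x_{a+1}-\x_a = \mu_a(\q_j-\q_i)$ and $\sum_a\mu_a=1$, and sum the identity on each piece to obtain
\[
(\q_i-\q_j)\cdot\bigl(\p''(\q_i)-\p''(\q_j)\bigr) \;=\; -\sum_a \frac{1}{\mu_a}\bigl(\p'(\x_a)-\p'(\x_{a+1})\bigr)^2.
\]

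Because $|\p'|^2$ is convex quadratic on each simplex, it attains its $X$-maximum at some vertex $v$ of $K$, which must lie in $X - X_0$ with $|\p'(v)|^2 > 0$. Apply the spider tensegrity $(G_{sp},\q)$ at $v$ with equilibrium stress $\omega$ positive on every edge having at least one endpoint in $X-X_0$. Multiplying the displayed segment identity by $\omega_{ij}$ and summing over all spider edges, the left side collapses to $0$ by equilibrium at each vertex of the spider, so
\[
\sum_{i<j} \omega_{ij}\sum_a \frac{1}{\mu_a^{ij}}\bigl(\p'(\x_a^{ij})-\p'(\x_{a+1}^{ij})\bigr)^2 \;=\; 0.
\]
Edges with both endpoints in $X_0$---which in the constructions motivating the definition lie inside $X_0$ where $\p' \equiv 0$---contribute $0$; every other edge has $\omega_{ij} > 0$, forcing each corresponding squared term to vanish. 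Hence $\p'$ is constant along every cable of the spider.

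To conclude, I would argue that the spider must contain a vertex in $X_0$: otherwise every edge would be positively stressed, and $\sum_j \omega_{ij}(\q_i-\q_j)=0$ with $\omega>0$ would make any extreme point of the convex hull of the spider's vertices a strict positive convex combination of its neighbors, impossible without a zero-length edge. Because the spider is connected, some path from $v$ ends at an $X_0$-vertex, and the first edge of this path whose other endpoint lies in $X_0$ must be a cable (its originating endpoint is still in $X-X_0$). Cable-constancy then forces the value of $\p'$ at that $X_0$-vertex to equal $\p'(v)\neq 0$, contradicting $\p'\equiv 0$ on $X_0$. The principal delicacy is the handling of spider edges with both endpoints in $X_0$: this is automatic in the Maxwell--Cremona constructions the theorem targets, where such edges genuinely lie in $X_0$, but in full generality one would need either that the spider tensegrity can be chosen with all stresses positive or that bar edges are confined to $X_0$.
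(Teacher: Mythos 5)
Your proof is correct and follows essentially the same route as the paper's: subdivide the spider tensegrity by the simplices of $K$, extend $(\p',\p'')$ affinely over each simplex so that the second-order equations persist on every sub-segment, and pair the equilibrium stress against the second-order equation to force the weighted sum of squares to vanish, whence $\p'$ is constant along positively stressed edges out to $X_0$. The delicacy you flag about spider edges with both endpoints in $X_0$ is genuine but is equally implicit in the paper's own proof, which simply asserts that the subdivided stress is positive on all edges in $X-X_0$ and treats the remaining terms as vanishing.
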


\begin{proof} 
Let $\p_1$ be any vertex of $(G,G_0,\p)$, not in $G_0$.  
By the definition of a
spider set, there is a spider tensegrity $(G_{sp},\q)$ 
of $(X,X_0)$,
where $\q_1=\p_1$
is a vertex in that tensegrity and $(G_{sp},\q)$ has an equilibrium stress
$\omega$, positive on all the edges with at least one vertex in $X-X_0$.  Let $(\p',\p'')$ be a
second-order flex of $(G,G_0,\p)$, where 
$\p_j'=0$ for vertices in $G_0$.
We will show that $\p'_1=0$.  Applying
this to all the vertices of $G$ not in $G_0$ will show our result.

For  any simplex $\sigma$  of $K$  and edge  $\tau$ of  the tensegrity
$(G_{sp},\q)$, 
the sets $\sigma \cap \tau$,
that are  non-empty, provide a  subdivision of the edges  of $(G_{sp},\q)$
say  $(\overline{G_{sp}},\r)$.   The  second-order  flex  of  
$(G,G_0,\p)$ 
extends
naturally  to  a  corresponding  second-order  flex  of  each  of  the
simplices  of $K$,  and in  particular  to the  segments $\sigma  \cap
\tau$.  Thus  there is a corresponding  second-order flex $(\r',\r'')$
of $(\overline{G_{sp}},\r)$, 
where $\p'_1=\q_1'=\r'_1$, say and $\r'_j =0$
for $\r_j  \in X_0$.  Similarly  there is a  corresponding equilibrium
stress  $\bar{\omega}$ for $(\overline{G_{sp}},\r)$,  that is  positive on
all the  edges in $X-X_0$.  Then  for the rigidity  matrix $R(\r)$ for
$(\overline{G_{sp}},\r)$,  we have $\bar{\omega}^t R(\r)  = 0$,  by  
Proposition~\ref{prop:cokernel}.
This gives us the contradiction:
\ba
0 &=&
 R(\r)\r'' +  R(\r')\r' \\
&=& \bar{\omega}^t R(\r)\r'' + \bar{\omega}^t R(\r')\r'  \\
&=&  \bar{\omega}^t R(\r')\r'  \\
&=& \sum_{i<j} \bar{\omega}_{ij}(\r'_i-\r'_j)^2 
> 0, 
\ea
unless $\p_1'=\q'_1=\r'_1=0$.  \qed
\end{proof}

Note that even when the spider set is a triangle with its vertices
pinned, there are triangulations of the interior that 
are not ``spiderwebs''
 as shown in the classic twisted example in
Figure \ref{fig:twisted}a from \cite{Connelly-Henderson}.  
This means that 
any equilibrium stress 
must be negative on some of the internal edges.
Figure \ref{fig:twisted}b shows how the spider construction 
in the proof of Theorem~\ref{thm:spider}
would
work for the center vertex of Figure \ref{fig:twisted}a.
\begin{figure}[ht]
    \begin{center}
        \includegraphics[width=0.8\textwidth]{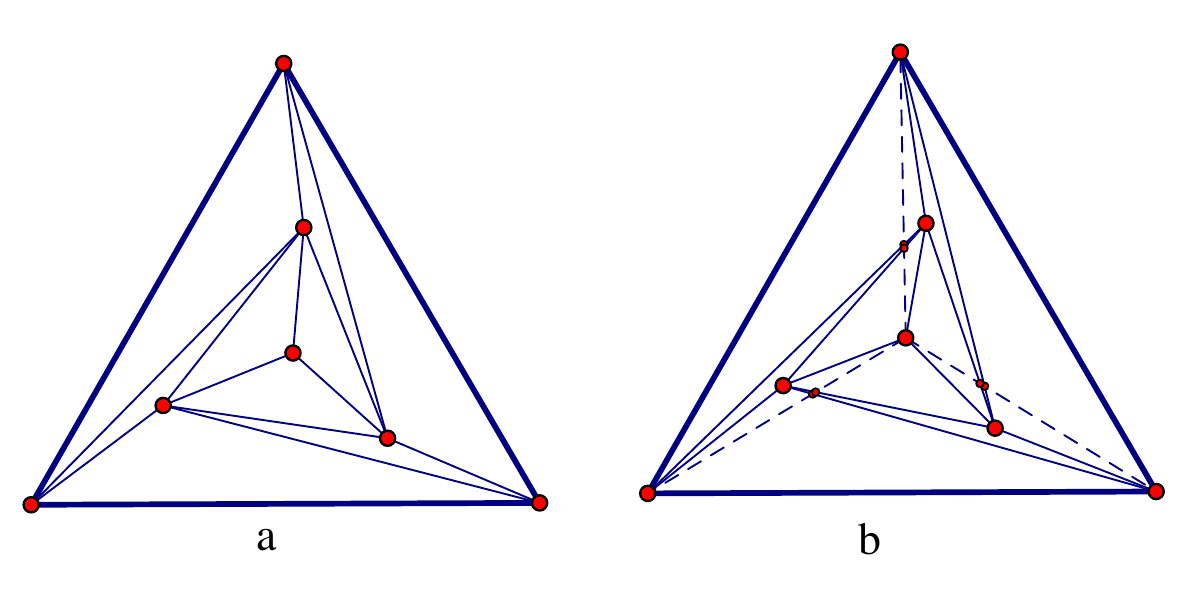}%
        \end{center}
    \caption{(a) The set $X$ is a single triangle and $X_0$ is its boundary.
The framework $(G,G_0,\p)$ is the one skeleton of a triangulation $(K,K_0)$
of $(X,X_0)$. (b) The outer three edges and the subdivided dashed edges
form a subdivided spider tensegrity
$(\overline{G_{sp}},\r)$,  of $(X,X_0)$ corresponding the center vertex.
}
    \label{fig:twisted}
    \end{figure}

Together with Theorem~\ref{thm:omega}, this gives us the following:
\begin{corollary}
\label{cor:omegaForP}
Let $(K, K_0)$ be any triangulation of a spider set $(X,X_0)$ and $(G,
G_0, \p)$ the corresponding framework of its one-skeleton, where $G_0$
corresponds to those vertices of $G$ that are in $X_0$.  
Then $(G,
G_0,{\p})$ 
must have an equilibrium stress $\omega$ such that the energy, $E_\omega$,
is positive definite on
$\CC(1,G_0)$. (see Definition~\ref{def:pconfig}).
\end{corollary}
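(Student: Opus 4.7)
The plan is to string together the two main results already established in the preceding section, since the corollary is precisely the combination of the pinned universal second-order rigidity proved in Theorem~\ref{thm:spider} with the stress-existence theorem of Theorem~\ref{thm:omega}. The hypotheses of the corollary are set up exactly so that both of these results apply without modification.

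First I would apply Theorem~\ref{thm:spider} to the data $(K,K_0)$, $(X,X_0)$, and the framework $(G,G_0,\p)$. This gives pinned universal second-order rigidity: there is no second-order flex $(\p',\p'')$ in $\R^D$ of $(G,\p)$ with $\p'$ a non-trivial first-order flex vanishing on the vertices of $G_0$. Second, I would feed this directly into Theorem~\ref{thm:omega}, which transforms pinned universal second-order rigidity (via the Farkas-type duality of Lemma~\ref{lem:farkas} applied to the cone $Y$ and linear space $L$ built in the previous subsection) into the existence of an equilibrium stress $\omega$ whose associated stress matrix $\Omega$ is positive definite on $\CC(1,G_0) \subset \R^n$. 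Since the energy $E_\omega$ acting on $\R^{nD}$ has matrix $I^D \otimes \Omega$, positive definiteness of $\Omega$ on $\CC(1,G_0)$ is exactly the desired conclusion about $E_\omega$.

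The main thing to check is the hypothesis of Lemma~\ref{lem:nint}, used inside Theorem~\ref{thm:omega}, requiring that the vertices of $G_0$ affinely span the configuration $\p$. In the spider set setups of interest in this paper — e.g.\ triangulations of a convex planar polygon with boundary vertices pinned, or more generally the boundary one-skeleton of a face of a convex polytope — $X_0$ automatically contains an affinely spanning set of vertices, so the hypothesis holds with no extra work. If ever one wished to apply the corollary to a spider set where $X_0$ fails to span affinely, one could enlarge $G_0$ to include a small number of additional vertices achieving full span and observe that positive definiteness on the corresponding larger $\CC(1,G_0)$ implies it on the original (smaller) subspace.

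There is no real obstacle here: all the work has already been done in establishing Theorem~\ref{thm:spider} (the spider tensegrity argument supplying the non-trivial equilibrium stresses needed to block second-order flexes) and Theorem~\ref{thm:omega} (the Farkas duality packaging these obstructions into a single stress matrix). The corollary is a bookkeeping consequence, and its proof should be no more than two or three sentences of explicit citation.
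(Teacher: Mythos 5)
Your proposal matches the paper's own (implicit) proof exactly: the corollary is stated in the paper as an immediate combination of Theorem~\ref{thm:spider} with Theorem~\ref{thm:omega}, which is precisely your chain of citations. Your additional remark about verifying the affine-span hypothesis of Lemma~\ref{lem:nint} (needed inside Theorem~\ref{thm:omega}) is a point the paper leaves implicit, and your observation that one may enlarge $G_0$ if necessary handles it correctly.
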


The existence result of Corollary~\ref{cor:omegaForP}
is somewhat related to the notion of a discrete Laplacian operator~\cite{gr}.
In that setting, given a triangulation of a 
polygon $F$ in $\RR^2$ (which need not even be convex), one looks
for a vector $\omega \in \RR^e$ with the property that $E_\omega$ is
positive away from the boundary vertices,
and that for all \emph{internal} vertices (not on the polygon boundary)
we have $\sum_{j\in N(i)} \omega_{ij} (\p_i-\p_j)=0$. Such an $\omega$ acts 
as an
equilibrium stress for $(G,\p)$ under the
added assumption that all of the boundary
vertices are fully pinned to all orders. 
Under these weaker requirements, there exist
well known constructive approaches for generating such an $\omega$.

One such construction uses the so called ``cotangent'' 
weights~\cite{Pinkall-Polthier},
which assigns $\omega_{ij} :=
\cot(\theta_{ij}) + \cot(\theta_{ji})$ to each internal edge (see
Figure~\ref{fig:cotan}).  Depending on the geometry of the
triangulation, some of these weights may be negative.  
They derive this energy 
in the context of  a 
Dirichlet energy computation.
There is also
another derivation for the cotangent weights that uses Heron's formula
for the area of a triangle, the law of sines and the law of cosines.
See \cite{desb}, for example.  But when the polygon $F$ has more than
three vertices, this construction cannot be used to generate an
$\omega$ that is in equilibrium \emph{at the boundary vertices}.

\begin{figure}[ht]
    \begin{center}
        \includegraphics[width=0.4\textwidth]{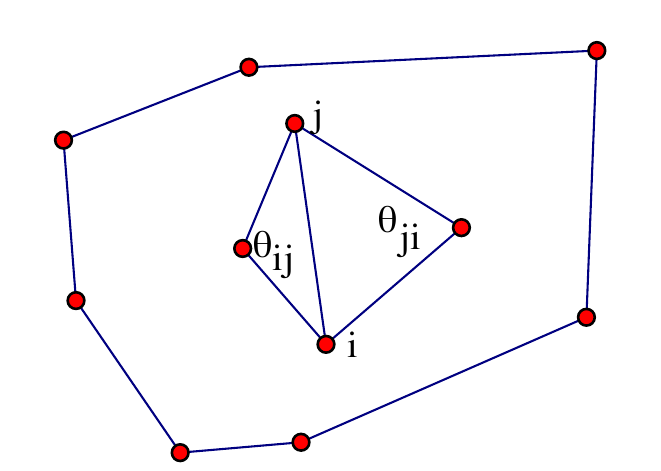}%
        \end{center}
    \caption{
The cotangent construction  assigns $\omega_{ij} :=
\cot(\theta_{ij}) + \cot(\theta_{ji})$ to each internal edge.
}
    \label{fig:cotan}
    \end{figure}

A related question is if a ``random" set of holes is put in a membrane with its boundary clamped, is there a critical threshold with a ``phase change" where the membrane becomes not rigid in $\R^3$?  It turns out for the continuous case \cite{Connelly-Volkov} and the triangulated case \cite{Rybnikov-Volkov}, the answer is no.  With high probability the membrane will have some flexible parts.

\section{Triangulated convex polytopes with holes}

We now wish to 
consider a convex polytope $P$ in $\R^3$ and a bar
framework $(G,\p)$, where all the vertices and bars are contained in
the two-dimensional boundary surface of $P$.  

\begin{definition} 
Let $P$ be 
a convex polytope
in $\R^3$.  Let $P^{(1)}$ be the underlying point set
of the one-skeleton of $P$.
Let $H$ be a polyhedral subset of the boundary surface of $P$.
We say that $H$ is holeyhedron if the following
properties hold for every (two-dimensional) face $F$ of $P$:

 \begin{enumerate}[(a)]\label{def:holeyhedron}
 	\item The one-skeleton $P^{(1)} \subset H$.
	\item \label{holey-boundary} 
Any infinitesimal flex in the plane of $F$ of any triangulation of $H\cap F$ is trivial on vertices that are in $P^{(1)} \cap F$. 
	\item The polyhedron $(H\cap F, P^{(1)}\cap F)$ is a spider set.
\end{enumerate}

\end{definition}

Note that the complete boundary surface of a convex polytope
(without any holes) is always a holeyhedron.

The following is a slight generalization 
of~\cite[Theorem 5.2]{connelly-second}
using Definition~\ref{def:holeyhedron}(\ref{holey-boundary}).
This relies deeply on Alexandrov's theorem about the infinitesimal 
rigidity of certain triangulated convex polytopes.
\begin{lemma}
\label{lem:con52}
Let $(G,{\p})$ 
be the bar framework
of a triangulation of a holeyhedron.  Let 
$\p'$ be a first order flex in $\RR^3$. Then $\p'$, 
when restricted to vertices in $P^{(1)}$,
is a trivial first-order flex.
\end{lemma}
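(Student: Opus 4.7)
The plan is to reduce the statement to a direct application of Alexandrov's classical theorem on infinitesimal rigidity of triangulated convex polytopes whose triangulation vertices all lie on the natural $1$-skeleton. The bridge will be built by turning the three-dimensional flex $\p'$ into a planar flex on each two-dimensional face of $P$, and then invoking property~(b) of the holeyhedron definition to pin down its action on $P^{(1)}$.

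First I would fix a two-dimensional face $F$ of $P$ and orthogonally decompose $\p'_i = \p'_{F,i} + z_i N_F$ for each vertex $\p_i \in F$, where $N_F$ is a unit normal to $F$ and $\p'_{F,i}$ lies in the plane of $F$. Every edge $\{i,j\}$ of the triangulation lying in $H \cap F$ has direction $\p_i - \p_j$ in the plane of $F$, so the first-order flex condition $(\p_i - \p_j) \cdot (\p'_i - \p'_j) = 0$ collapses to $(\p_i - \p_j) \cdot (\p'_{F,i} - \p'_{F,j}) = 0$. Hence $\p'_F$ is an infinitesimal flex, in the plane of $F$, of the induced triangulation of $H \cap F$. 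Property~(b) of the holeyhedron definition then forces $\p'_F$ to be trivial on $P^{(1)} \cap F$, so the in-plane distance between any two vertices of $P^{(1)} \cap F$ is preserved by $\p'_F$ to first order.

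Because any two vertices of $P^{(1)} \cap F$ both lie in the plane of $F$, their three-dimensional distance coincides with their in-plane distance and is therefore preserved by $\p'$ to first order, regardless of whether they are joined by an edge of $G$. Consequently I may freely augment the framework by inserting bars between vertices of $P^{(1)} \cap F$ to complete a full triangulation of each face of $P$, and $\p'|_{P^{(1)}}$ will remain a first-order flex of this augmented framework. The augmented framework is a triangulation of the boundary of the convex polytope $P$ whose vertices all lie on its natural $1$-skeleton (each natural edge is merely subdivided by triangulation vertices), so Alexandrov's theorem guarantees that it is infinitesimally rigid in $\R^3$; this forces $\p'|_{P^{(1)}}$ to be trivial.

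The main obstacle I anticipate is the initial step that collapses the three-dimensional rigidity equations along a face into purely planar ones and extracts a planar flex to which property~(b) can be applied. Once that is carried out, the coincidence of in-plane and three-dimensional distances inside $F$ lets me introduce exactly the face-spanning bars needed to put myself in the Alexandrov setting, and the classical theorem finishes the argument.
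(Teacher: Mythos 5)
Your proposal is correct and follows exactly the route the paper indicates: the paper gives no written proof for this lemma, deferring to Theorem 5.2 of the cited Connelly paper together with Definition~\ref{def:holeyhedron}(\ref{holey-boundary}) and Alexandrov's theorem, and your argument (project the flex into each face plane, invoke property (b) to get first-order distance preservation among all pairs in $P^{(1)}\cap F$, then insert face-triangulating bars on $P^{(1)}$ and apply Alexandrov's infinitesimal rigidity for triangulations with no vertices interior to natural faces) is precisely that reduction, carried out correctly.
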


We now state our second main result of this paper.

\begin{theorem}
\label{thm:pss}
Any triangulation $(G,\p)$
of a holeyhedron, $H$,  is prestress stable in $\RR^3$.
\end{theorem}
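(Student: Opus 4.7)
The plan is to assemble the required equilibrium stress $\omega$ on $(G,\p)$ by patching together face-by-face stresses produced by Corollary~\ref{cor:omegaForP}, then use Lemma~\ref{lem:con52} to reduce any non-trivial first-order flex in $\RR^3$ to one that vanishes on the one-skeleton $P^{(1)}$, at which point the per-face positive definiteness closes the argument.

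First I would exploit the holeyhedron structure face by face. For each two-dimensional face $F$ of $P$, the pair $(H\cap F, P^{(1)}\cap F)$ is a spider set by Definition~\ref{def:holeyhedron}(c), so Corollary~\ref{cor:omegaForP} applied to the triangulation of $H\cap F$ with pins $P^{(1)}\cap F$ yields an equilibrium stress $\omega_F$ whose energy $E_{\omega_F}$ is positive definite on $\CC(1, P^{(1)}\cap F)$. Since the matrix of $E_{\omega_F}$ acting on $\RR^{n_F\cdot 3}$ has the tensor form $I^3\otimes \Omega_F$, positive definiteness upgrades automatically to $\CC(3, P^{(1)}\cap F)$. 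I then set $\omega:=\sum_F \omega_F$, with an edge on the boundary shared by two faces receiving the sum of the contributions from each. Because each $\omega_F$ is in equilibrium at every vertex of the triangulation of $F$ (including those on the boundary of $F$), summing the per-face equilibrium conditions at any vertex $v$ of $G$ shows that $\omega$ is an equilibrium stress of the full framework, and $E_\omega(\p')=\sum_F E_{\omega_F}(\p'|_F)$.

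Next, given a non-trivial first-order flex $\p'$ of $(G,\p)$ in $\RR^3$, Lemma~\ref{lem:con52} guarantees that $\p'|_{P^{(1)}}$ is the restriction of a rigid motion $T$ of $\RR^3$. Setting $\p'' := \p' - T$, we obtain a first-order flex that vanishes on every vertex of $P^{(1)}$. Since trivial flexes lie in the kernel of every equilibrium stress matrix (Proposition~\ref{prop:maxrank}), we have $E_\omega(\p')=E_\omega(\p'')$. For each face $F$, the restriction $\p''|_F$ lies in $\CC(3, P^{(1)}\cap F)$, so $E_{\omega_F}(\p''|_F)\ge 0$ with equality iff $\p''|_F=0$. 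If $E_\omega(\p')=0$ then $\p''|_F=0$ for every face $F$, which forces $\p''\equiv 0$ because every vertex of $G$ lies in some face; but then $\p'=T$ would be trivial, contradicting our hypothesis. Hence $E_\omega(\p')>0$, so $\omega$ certifies prestress stability.

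I expect the main obstacle to be the bookkeeping that $\omega = \sum_F \omega_F$ really is a single equilibrium stress on $(G,\p)$ and that its energy decomposes cleanly as $\sum_F E_{\omega_F}(\p'|_F)$; edges along shared face boundaries get contributions from two faces, and one has to verify that the per-face equilibrium relations combine additively at each vertex on $P^{(1)}$. Once that is in place, the rest of the argument is essentially automatic from Lemma~\ref{lem:con52}, Corollary~\ref{cor:omegaForP}, and the tensor structure of the stress-energy quadratic form.
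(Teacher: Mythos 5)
Your proposal is correct and follows essentially the same route as the paper's proof: per-face stresses from Corollary~\ref{cor:omegaForP} summed into a single equilibrium stress whose energy is positive definite on configurations vanishing on $P^{(1)}$, combined with Lemma~\ref{lem:con52} and subtraction of a trivial flex. The extra bookkeeping you flag (additivity of the equilibrium conditions and of the energy across faces, and the observation that a flex vanishing on every face vanishes identically) is exactly what the paper leaves implicit, and your verification of it is sound.
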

\begin{proof}
From Corollary~\ref{cor:omegaForP}, we have for each 
triangulated  $F\cap H$ of
our face
an $\omega_F$ such that
$E_{\omega_F}$ 
is positive definite 
on all vectors that vanish on $P^{(1)}\cap F$.
By simply adding together all of these $\omega_F$, we obtain
an $\omega$ that is an equilibrium stress for 
$(G,\p)$
with an associated stress matrix $\Omega$ that
must be positive definite on any vector  $\p' = (\p'_1, \dots, \p'_n)$ 
that vanishes on $P^{(1)}$.

Meanwhile, 
from Lemma~\ref{lem:con52}, all first-order flexes $\p'$ in $\RR^3$
for $(G,\p)$
have $\p'$ trivial on $P^{(1)}$.
By adding an appropriate trivial flex to 
$\p'$ we can make $\p'$ vanish on $P^{(1)}$.
In light of 
Equation (\ref{eqn:skew}),
this addition 
will not 
change $E_\omega(\p')$, as $\omega$ is an equilibrium stress
for $\p$. Thus, if $\p'$ is any non-trivial first-order flex 
of the triangulation, we have $E_\omega(\p')>0$,
making the triangulation $(G,\p)$ of $H$ prestress stable in $\RR^3$.
\qed \end{proof}

\section{Extensions and related results}

If one considers only polyhedral subsets $H$ of the boundary 
of a convex polytope $P$ in three-space, where
one is allowed to triangulate $H$ at will, then it is
important, for each face, $F$, that no hole have a vertex on the interior
of any of the natural edges of $P$, (although there can be holes that touch
the natural vertices).  
For example, Figure 24 of
\cite{connelly-second} is a tetrahedron with a small slit on one face
touching the relative interior of one edge that can be subdivided and
flexed as a finite mechanism to be flat in the plane.  See also
\cite{TetraFlattening_OSME2014} for methods for decreasing the size of
the slit.

In more detail, 
suppose one vertex $\x$ of one of the holes of $H$ 
lies
on a natural edge of the polytope. And suppose
the interval $[\x,\y]$ is part of the boundary of that hole, interior to
its face in $P$, as in Figure \ref{fig:holes}.  Then any vertex,
say $\z$,  on the
interior interval $[\x,\y]$ cannot be part of a spider tensegrity,
because of the positivity requirement on the equilibrium stress for
all interior edges of a spider tensegrity.
For example, one cannot obtain equilibrium at $\z$ if there is 
positive stress on an edge, say,  from $\z$ towards $\w$.
And one cannot obtain equilibrium at $\x$ if there is 
positive stress on an edge from $\x$ towards $\z$.
This positivity is 
required by  Definition
\ref{def:holeyhedron}c.

\begin{figure}[ht]
    \begin{center}
        \includegraphics[width=0.4\textwidth]{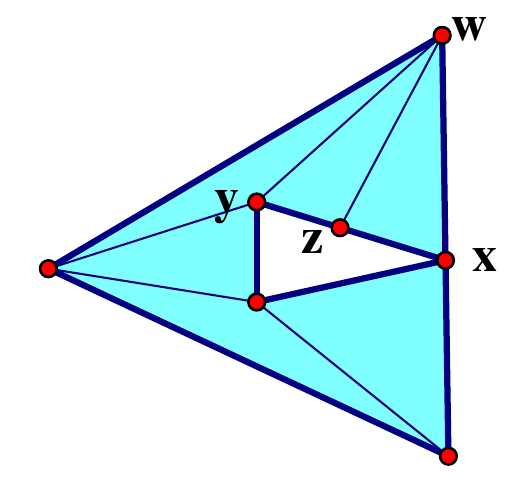}%
        \end{center}
    \caption{Any vertex $\z$ of a subdivision of face on the interval
      between the points $\x$ and $\y$, as shown, cannot be part of a
      spider tensegrity.  All the edges adjacent to $\z$
      must have zero stress. }
    \label{fig:holes}
    \end{figure}

Alternatively,
suppose the surface $H$ 
has a hole with an edge that coincides completely
with a natural
edge of the polytope.  Then
when that edge is subdivided (which is allowed in a triangulation of $H$), 
then this triangulation of 
the set $H \cap F$ will not have the first-order rigidity property of
Definition~\ref{def:holeyhedron}\ref{holey-boundary}.

With the above comments in mind it seems that if an arbitrary
triangulation of a polyhedral subset of the surface of a convex polytope  $P$
is prestress stable, each natural edge $e$ of $P$ must have at least a
small ``flange'' that lies in each side of $e$ in each of the two
adjacent faces so that the relative interior of 
$e$ is in the topological interior relative to
the two-dimensional surface, as in Figure~\ref{fig:good-holes}(a).

\subsection{Prestressed fixed frameworks }

We can extend the notion of a spider tensegrity as in Definition
\ref{def:spider-set} to say that a framework $(G, G_0, \p)$ is a
\emph{spider tensegrity} if there is an equilibrium stress for $(G,
\p)$ that is positive on all the edges with at least one vertex in
$G-G_0$.

If one is given a fixed bar (or tensegrity) framework, we can
use some of the techniques described previously to generate
prestress stable structures
as follows:
\begin{corollary}\label{cor:spider-face} Suppose a  bar framework $(G, \p)$ in $\R^d$ is such that there are vertices $G_0$ of $G$ such that  $(G, G_0, \p)$ is a spider tensegrity, and $G_0$ is contained in a subgraph $G_1$ of $G$ such that $(G_1, \q)$ is first-order rigid, with $\q$ corresponding to the vertices of $G_1$.	Then $(G, \p)$ is prestress stable in $\R^d$.
\end{corollary}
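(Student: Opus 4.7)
The strategy is to mimic the proof of Theorem~\ref{thm:pss}, using the stress $\omega$ from the hypothesized spider tensegrity as the candidate prestress for $(G,\p)$, and substituting the infinitesimal rigidity of $(G_1,\q)$ for the role played there by Lemma~\ref{lem:con52}. Given any first-order flex $\p'$ of $(G,\p)$ in $\R^d$, I would first restrict $\p'$ to the vertices of $G_1$; this restriction is automatically a first-order flex of $(G_1,\q)$, which by infinitesimal rigidity must be trivial, arising from some skew-symmetric matrix $A$ and translation $\b$ as in Equation~(\ref{eqn:skew}). Extending $(A,\b)$ to all vertices of $G$ yields a trivial flex $\p'_{\mathrm{triv}}$ of $(G,\p)$, and the difference $\hat{\p}':=\p'-\p'_{\mathrm{triv}}$ is then a first-order flex of $(G,\p)$ that vanishes on every vertex of $G_1$, in particular on $G_0$. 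If $\p'$ is non-trivial then $\hat{\p}'\ne 0$.

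Because $\omega$ is an equilibrium stress, its stress matrix $\Omega$ annihilates each coordinate vector of any trivial infinitesimal motion (such vectors lie in the span of the coordinate vectors of $\p$ together with the all-ones vector, all of which are in $\ker\Omega$). This gives both $E_\omega(\p'_{\mathrm{triv}})=0$ and vanishing of the cross term in the polarization of $E_\omega$ applied to $\hat{\p}'$ and $\p'_{\mathrm{triv}}$, so $E_\omega(\p')=E_\omega(\hat{\p}')$. It therefore suffices to show that $E_\omega(\hat{\p}')>0$ whenever $\hat{\p}'\ne 0$.

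To verify this, I would expand
\[
E_\omega(\hat{\p}')=\sum_{\{i,j\}}\omega_{ij}(\hat{\p}'_i-\hat{\p}'_j)^2.
\]
Edges with both endpoints in $G_1$ contribute zero because $\hat{\p}'$ vanishes on $G_1$. Every remaining edge has at least one endpoint outside $G_1\supset G_0$, hence at least one endpoint in $G-G_0$, so by the spider tensegrity hypothesis $\omega_{ij}>0$ on such edges. Thus each summand is non-negative and $E_\omega(\hat{\p}')\ge 0$. If $E_\omega(\hat{\p}')=0$, then every positively-stressed edge $\{i,j\}$ forces $\hat{\p}'_i=\hat{\p}'_j$. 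If $\hat{\p}'_v\ne 0$ for some vertex $v$ outside $G_1$, I would choose a path in $G$ from $v$ to a vertex of $G_1$: each edge of this path is positively stressed, so $\hat{\p}'$ is forced constant along it, and the final edge crossing into $G_1$ then forces this common value to be $0$, a contradiction. Hence $\hat{\p}'\equiv 0$, and therefore $E_\omega(\p')>0$ for every non-trivial first-order flex $\p'$, exhibiting $(G,\p)$ as prestress stable with stress $\omega$.

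The hard part will be the propagation step in the last paragraph, which implicitly needs every connected component of $G$ to meet $G_1$; this is automatic in the intended applications, where $G$ is connected and $G_1$ is a nontrivial rigid spine, but should be kept in mind as a tacit hypothesis. The remaining steps are routine bookkeeping with equilibrium stress matrices and the definition of prestress stability.
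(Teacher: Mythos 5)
Your argument is correct, and since the paper states this corollary without proof, you have in effect reconstructed the intended one: it is exactly the template of Theorem~\ref{thm:pss}, with the spider-tensegrity stress $\omega$ used directly in place of the stress supplied by Corollary~\ref{cor:omegaForP} (the only edges where $\omega$ may fail to be positive lie inside $G_0\subset G_1$, where the corrected flex vanishes, so they contribute nothing to $E_\omega$), and with the first-order rigidity of $(G_1,\q)$ standing in for Lemma~\ref{lem:con52} to make the flex vanish on $G_1$ after subtracting a trivial motion. Your closing caveat about connectivity is legitimate --- as literally stated the corollary fails for a graph with a vertex not joined through positively stressed edges to $G_1$ --- but the same tacit assumption already appears in the paper's proof of Theorem~\ref{thm:spider}, where the vanishing of $\sum_{i<j}\bar\omega_{ij}(\r'_i-\r'_j)^2$ is used to conclude $\r'_1=0$, which likewise requires the spider to connect its free vertices to $X_0$ through positively stressed edges.
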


Note that the framework $(G, \p)$ above may not be first-order rigid as is the case for Figure \ref{fig:cable-inside}b, but the boundary is part of the framework in Figure~\ref{fig:cable-inside}a, which is first-order rigid in the plane.
 \begin{figure}[ht]
    \begin{center}
        \includegraphics[width=0.7\textwidth]{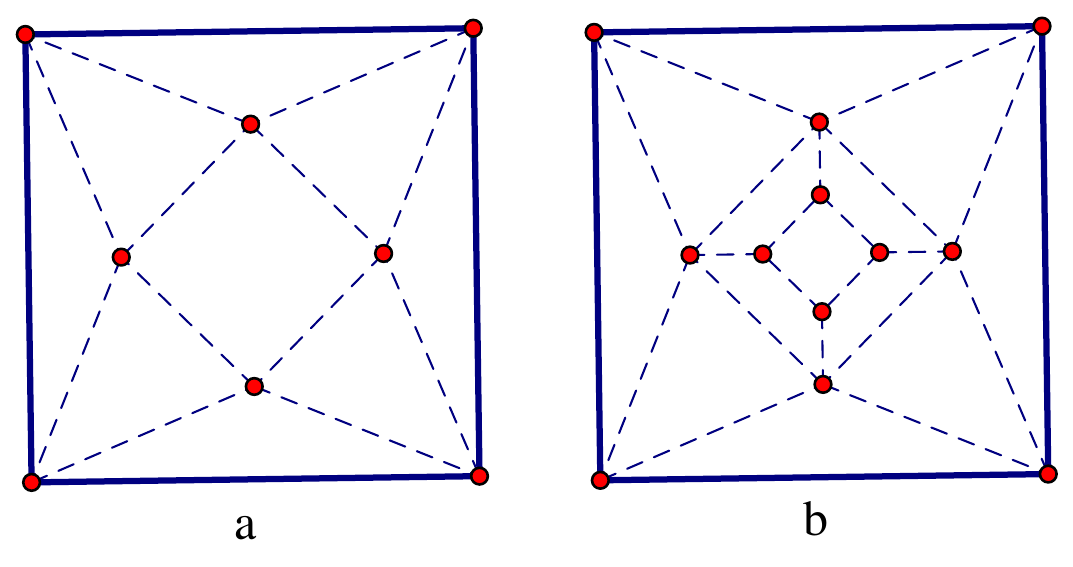}%
        \end{center}
    \caption{}
    \label{fig:cable-inside}
    \end{figure}

 \begin{corollary} If each face $F$ of a convex polytope in $\R^3$ contains a spider tensegrity such that the vertices on each boundary of $F$ are first-order rigid in the plane of $F$, as in Corollary \ref{cor:spider-face}, then the union of these spider tensegrities is prestress stable in $\R^3$.
\end{corollary}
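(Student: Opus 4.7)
The plan is to parallel the proof of Theorem~\ref{thm:pss}, substituting the given face spider tensegrities for triangulations of spider sets, and using the classical (infinitesimal) Cauchy/Dehn rigidity of rigid-faced convex polytopes in place of Lemma~\ref{lem:con52}. For each face $F$ of $P$, let $(G^F,G_0^F,\p^F)$ denote the given spider tensegrity, let $G_1^F\supseteq G_0^F$ be the first-order rigid boundary subgraph (rigid within the plane of $F$), set $G_1^{\mathrm{tot}}:=\bigcup_F G_1^F$, and write $(G,\p)$ for the union framework.

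The first step is to produce, for each $F$, an equilibrium stress $\omega_F$ for $(G^F,\p^F)$ whose energy $E_{\omega_F}$ is positive definite on all 3D configurations vanishing on $G_1^F$. For this I would verify that $(G^F,G_1^F,\p^F)$ is pinned universally second-order rigid. Given a second-order flex $(\p',\p'')$ of $(G^F,\p^F)$ in $\R^D$ with $\p'_j=0$ for $j\in G_1^F\supseteq G_0^F$, pair the face's spider tensegrity stress $\sigma_F$ with the second-order equation $R(\p^F,\p'')+R(\p',\p')=0$; using $\sigma_F^t R(\p^F)=0$ (Proposition~\ref{prop:cokernel}) this yields $\sum_{i<j}\sigma_F^{ij}(\p'_i-\p'_j)^2=0$. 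Since $\sigma_F^{ij}>0$ on every edge touching $G^F\setminus G_0^F$, while edges inside $G_0^F\subseteq G_1^F$ contribute $0$, every term must vanish, and propagation from the pinned $G_1^F$ across the tensegrity graph forces $\p'\equiv 0$. Theorem~\ref{thm:omega} then delivers the desired $\omega_F$ (positive definite on $\CC(1,G_1^F)$, and hence, via the $I^3\otimes\Omega_F$ structure of $E_{\omega_F}$, positive definite on all 3D configurations in $\R^{3n^F}$ vanishing on $G_1^F$).

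Next, I would assemble the global stress $\omega:=\sum_F\omega_F$ (each $\omega_F$ extended by zero outside the edges of $F$). Vertex equilibrium in the ambient $\R^3$ is inherited from each summand, so $\omega$ is an equilibrium stress for $(G,\p)$; and $E_\omega=\sum_F E_{\omega_F}$ is positive definite on $\CC(3,G_1^{\mathrm{tot}})$, because each summand is nonnegative on such configurations and simultaneous vanishing forces $\p'\equiv 0$ on each $G^F$, hence on all of $G$.

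Finally, I would invoke the infinitesimal Cauchy/Dehn theorem to handle full-dimensional flexes. Because each $G_1^F$ is first-order rigid within the plane of $F$, the skeleton $\bigcup_F G_1^F$ behaves infinitesimally as a convex polytope with rigid faces hinged along its natural edges, hence is first-order rigid in $\R^3$. Thus any first-order flex $\p'$ of $(G,\p)$ in $\R^3$ restricts to a trivial flex on $G_1^{\mathrm{tot}}$; choosing a Euclidean infinitesimal isometry $T$ of $\R^3$ agreeing with $\p'$ there and setting $\tilde{\p}':=\p'-T$, Equation~(\ref{eqn:skew}) gives $E_\omega(\p')=E_\omega(\tilde{\p}')$, since $\Omega$ annihilates the coordinates of any trivial flex. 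If $\p'$ is non-trivial then $\tilde{\p}'\ne 0$, vanishes on $G_1^{\mathrm{tot}}$, and the previous step yields $E_\omega(\p')=E_\omega(\tilde{\p}')>0$, establishing prestress stability in $\R^3$. I expect the main obstacle to be precisely this Cauchy/Dehn invocation, where one must carefully ensure that first-order rigidity of each face boundary within its own plane really does trigger the rigid-face version of Cauchy for the combined skeleton — this is essentially classical but warrants a careful statement.
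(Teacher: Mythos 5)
Your proof follows what is evidently the intended argument (the paper states this corollary without proof): it is the proof of Theorem~\ref{thm:pss} with Corollary~\ref{cor:omegaForP} replaced by a direct verification that each face's spider tensegrity, with its boundary pinned to first order, is pinned universally second-order rigid --- which is exactly the stress-pairing computation from the proof of Theorem~\ref{thm:spider} --- followed by Theorem~\ref{thm:omega}, summation of the per-face stresses, and the reduction of an arbitrary first-order flex in $\R^3$ to one vanishing on the face boundaries. The one step that is not right as literally stated is the appeal to the hinged-rigid-panel version of Cauchy's theorem. First-order rigidity of $G_1^F$ in the plane of $F$ constrains only the component of the flex tangent to that plane; the normal displacements of the vertices of $G_1^F$ are dictated vertex-by-vertex by whichever adjacent faces those vertices also lie in, and the resulting normal field on $F$ is affine along each natural edge of $F$ but need not assemble into a single affine (i.e.\ rigid-panel) motion of $F$ --- the face can still ``warp.'' So the union of the $G_1^F$ is not a polytope with rigid faces in the panel sense. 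The statement you actually need --- that in-plane triviality on every face forces the $\R^3$ flex to be trivial on the union of the face boundaries --- is Alexandrov's theorem in the form of the paper's Lemma~\ref{lem:con52}, suitably restated for frameworks that are not triangulations of a holeyhedron, and that is the citation to make here. You correctly flagged this as the delicate point; with that substitution the argument is complete.
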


One may apply the Roth-Whiteley criterion \cite{Roth-Whiteley} for the first-order rigidity of tensegrity frameworks to the framework in Figure \ref{fig:cable-inside}a.  Namely that the underlying bar framework is first-order rigid, and there is an equilibrium stress, positive on all the cables, and negative on all the struts.

\appendix{}

\section{Super stability}
\label{sec:ss}
In this section we provide a proof of Theorem~\ref{thm:ss}.
In this section $d'$ will be any fixed dimension greater than $d$.

To prove that super stability implies prestress stability
in $\RR^{d'}$,
we will use the fact that a PSD equilibrium stress matrix $\Omega$ of rank
$n-d-1$ must block all non-trivial infinitesimal flexes except
for ones arising from affine transforms of $\p$.
Then we will argue, 
using   the assumption of no
conic at infinity, that any 
affine 
infinitesimal flex must be trivial.

As a warm up, we start with the following lemma which we mentioned in 
Section~\ref{sec:ur}
\begin{lemma}
\label{lem:fq}
Let $(G,\p)$ be   a framework with a $d$-dimensional span in $\R^d$.
Suppose $Q$ is a $d$-by-$d$ symmetric matrix such that for all pairs, 
$\{k,l\}$, we have 
$(\p_k - \p_l)^tQ
(\p_k - \p_l) = 0$. 
Then $Q$ must be the zero matrix.
\end{lemma}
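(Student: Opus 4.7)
The plan is to use polarization together with the assumption that $\p$ has full $d$-dimensional affine span.

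First, since the affine span of $\p$ is $d$-dimensional, I can pick indices $0, 1, \ldots, d$ so that the vectors $\v_i := \p_i - \p_0$, for $i = 1, \ldots, d$, form a basis of $\R^d$. Each such pair difference is of the form $\p_k - \p_l$, so by hypothesis I get $\v_i^t Q \v_i = 0$ for every $i$. Likewise, for every pair $i \ne j$ in $\{1, \ldots, d\}$, the hypothesis applied to $\p_i - \p_j = \v_i - \v_j$ yields
\[
0 = (\v_i - \v_j)^t Q (\v_i - \v_j) = \v_i^t Q \v_i - 2\, \v_i^t Q \v_j + \v_j^t Q \v_j.
\]
The outer two terms vanish by the previous step, so the cross term gives $\v_i^t Q \v_j = 0$. (This is just polarization, exploiting the symmetry of $Q$.)

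Combining the two computations, $\v_i^t Q \v_j = 0$ for all $i, j \in \{1, \ldots, d\}$. Since the $\v_i$ form a basis of $\R^d$, this forces $Q = 0$, as claimed.

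I do not expect any real obstacle here; the proof is a two-line polarization argument once one observes that the hypothesis is stated for \emph{all} pairs $\{k, l\}$ (not just edges of $G$), so one has access to the $d+1$ affinely independent points guaranteed by the full-span assumption. The only thing to flag is that if one replaced ``all pairs'' with only the edges of some graph, the conclusion would genuinely fail whenever the edge directions lie on a conic at infinity, which is precisely why the non-edge witness $\{k, l\}$ appears in the surrounding discussion of conics at infinity in Section~\ref{sec:ur}.
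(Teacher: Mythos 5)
Your proof is correct and is essentially the same argument as the paper's: both select $d+1$ affinely independent points and use polarization of the quadratic form over the resulting basis of difference vectors, the only cosmetic difference being that the paper first applies an affine change of coordinates to place those points at the origin and along the coordinate axes so that the vanishing of the symmetric part of $Q$ can be read off entrywise. Your closing remark about why the hypothesis must range over all pairs, not just edges, correctly identifies the role of the non-edge witness in the conic-at-infinity discussion.
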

\begin{proof}
Let us pick 
a subset $S$, of $d+1$ vertices in affine general position.
Next, we perform an affine change of coordinates so that one of the vertices
of $S$
is at the origin and each other vertex is along a unique coordinate axis.
Then we can use the conditions 
$(\p_k - \p_l)^tQ
(\p_k - \p_l) = 0$ to see that 
$Q$ is skew-symmetric. 
Since $Q$ is also assumed to be symmetric, it must be zero.\qed
\end{proof}

\begin{definition}
We say that an infinitesimal flex $\p'$ of $(G,\p)$
is \emph{deforming} 
if there is  some non-edge pair
$\{k,l\}$, with
$(\p_k - \p_l) \cdot
(\p'_k - \p'_l) \neq0$. 
\end{definition}

\begin{lemma}
\label{lem:ntp}
Suppose  a framework $(G,\p)$ with a $d$-dimensional span in $\R^{d'}$ has an
infinitesimal flex $\p'$, 
such that each $\p'_i$ lies in 
$\langle \p \rangle$.
If $\p'$ is non-deforming then  it is trivial.
\end{lemma}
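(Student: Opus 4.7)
The plan is to combine the infinitesimal-flex equation with the non-effective hypothesis to upgrade $\p'$ to an infinitesimal flex of the complete graph $K_n$ on all $n$ vertices, then argue that such a flex (with each $\p'_i$ lying in the $d$-dimensional affine span $\langle \p \rangle$) must be trivial, and finally lift the resulting trivial flex from $\R^d$ to $\R^D$.

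For the first step, the flex condition $(\p_i - \p_j)\cdot(\p'_i - \p'_j) = 0$ holds on every edge of $G$, and non-effectiveness means $(\p_k - \p_l)\cdot(\p'_k - \p'_l) = 0$ on every non-edge pair as well, so $\p'$ is an infinitesimal flex of $K_n$ at $\p$. Since each $\p'_i$ lies in the direction space of $\langle \p \rangle$, I would restrict attention to $\R^d$; a trivial flex $\p'_i = A\p_i + \b$ in $\R^d$ extends canonically to a trivial flex in $\R^D$ by padding the skew-symmetric matrix $A$ (and the vector $\b$) with zeros on the orthogonal complement of $\langle \p \rangle$.

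The main step is then to show that an infinitesimal flex of $K_n$ in $\R^d$, whose configuration has full $d$-dimensional affine span, is trivial. Here I would apply a trivial motion to reduce to $\p_1 = 0$ and $\p'_1 = 0$, so that the flex identity becomes $\p_k \cdot \p'_k = 0$ for each $k$ and $\p_k \cdot \p'_l + \p_l \cdot \p'_k = 0$ for each pair $k,l$. Choosing $\p_2, \dots, \p_{d+1}$ to be a basis of $\R^d$ (possible by the full span assumption), I would define a linear map $A \colon \R^d \to \R^d$ by $A\p_k := \p'_k$ on this basis; the antisymmetry identities on basis pairs force $A$ to be skew-symmetric, and for any remaining vertex $m$ the identity $\p_k \cdot \p'_m + \p_m \cdot \p'_k = 0$ combined with skewness of $A$ yields $\p_k \cdot (\p'_m - A\p_m) = 0$ for every basis vector $\p_k$, hence $\p'_m = A\p_m$. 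Thus $\p'$ is the trivial flex induced by $A$. There is no serious obstacle in this argument beyond careful bookkeeping of the reduction from $\R^D$ down to $\R^d$ and back.
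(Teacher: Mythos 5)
Your proof is correct and follows essentially the same route as the paper: reduce to $\R^d$, use non-effectiveness to extend the flex equation to all pairs, and show the resulting affine map has skew-symmetric linear part (the same computation as in Lemma~\ref{lem:fq}). The only difference is organizational: the paper interpolates an affine map $A_S$ on each $(d+1)$-point subset in general position and then patches overlapping subsets together, whereas you normalize $\p_1 = \p'_1 = 0$, build a single skew matrix $A$ from one basis, and verify $\p'_m = A\p_m$ directly for the remaining vertices --- slightly cleaner bookkeeping that avoids the patching step.
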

\begin{proof}
Without loss of generality
we can place both 
$\p$ and $\p'$ in $\R^d$.
For any subset $S$ of $d+1$ vertices in affine general position,
the restriction of $\p'$ and $\p$ to $S$ must satisfy 
$\p'_i = A_S \p_i +\t_S$ for some unique
$d$-by-$d$ matrix $A_S$ and $d$-vector
$\t_S$. 

By assumption, for all pairs of vertices 
$\{k,l\}$
in $S$
we have
\ba
0&=& 
(\p_k - \p_l) \cdot
(\p'_k - \p'_l) \\&=& 
(\p_k - \p_l)^t A_S
(\p_k - \p_l)  
\ea
As in the proof of Lemma~\ref{lem:fq},
This forces $A_S$ to be skew symmetric.

Let $T$ be another such subset that shares $d$ vertices with $S$,
where we have
$\p'_i = A_T \p_i +\t_T$.
Since $T$ and $S$ share $d$ vertices in general affine position, 
this means $A_T = A_S$ and $\t_T = \t_S$. 
(One way to see this is to 
work in a coordinate system 
such that 
for each of the shared $d$ vertices of $S$ and $T$,
the last coordinate of the associated
$\p_i$
vanishes. Then we 
note that a 
$d$-by-$d$
skew symmetric matrix is fully determined by its first $d-1$  columns.)

This process can be reapplied as needed to show that all of 
$\p'$ represents a trivial infinitesimal flex.\qed
\end{proof}

\begin{definition}
We say that an infinitesimal flex $\p'$ of 
$(G,\p)$ is
\emph{affine} if 
$\p'_i = M \p_i + \t$ 
for some $d'$-by-$d'$ matrix $M$ and some $d'$-vector $\t$.
\end{definition}

\begin{lemma}
\label{lem:at}
Suppose  a framework $(G,\p)$ with a $d$-dimensional span in $\R^{d'}$
has an affine infinitesimal flex $\p'$, where
each $\p'_i$ is orthogonal to 
$\langle \p \rangle$, then $\p'$ is a trivial infinitesimal flex.
\end{lemma}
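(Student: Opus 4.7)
The plan is to choose convenient coordinates and then exhibit an explicit skew-symmetric matrix and translation that realize $\p'$ as a trivial infinitesimal flex. Throughout, the key structural fact I will exploit is that orthogonality of every $\p'_i$ to $\langle \p \rangle$ forces $\p$ and $\p'$ to live in complementary subspaces, so the affine map $M$ decomposes in a very constrained block form.

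First I would translate so that some vertex, say $\p_1$, is at the origin; this does not change the affine span, nor does it affect whether the flex is trivial (it only shifts $\t$). After this translation, $\langle \p \rangle$ is a $d$-dimensional linear subspace of $\R^D$. Next, choose an orthonormal basis of $\R^D$ adapted to this splitting, so that $\langle \p \rangle$ is the span of the first $d$ coordinate vectors. Write $\p_i = (\bar\p_i,0)$ with $\bar\p_i \in \R^d$, and by the orthogonality hypothesis write $\p'_i = (0,\bar\p'_i)$ with $\bar\p'_i \in \R^{D-d}$. Also write $M$ in block form
\[
M = \begin{pmatrix} M_{11} & M_{12} \\ M_{21} & M_{22} \end{pmatrix}, \qquad \t = \begin{pmatrix} \t_1 \\ \t_2 \end{pmatrix},
\]
with block sizes $d$ and $D-d$.

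The equation $\p'_i = M\p_i + \t$, written in blocks, gives $0 = M_{11}\bar\p_i + \t_1$ and $\bar\p'_i = M_{21}\bar\p_i + \t_2$. Since the $\bar\p_i$ affinely span all of $\R^d$ (that is the content of the hypothesis that the affine span of $\p$ has dimension $d$), the first equation, evaluated on $d+1$ affinely independent $\bar\p_i$, forces $M_{11}=0$ and $\t_1=0$. The off-diagonal block $M_{21}$ and translation component $\t_2$ are the only pieces that actually matter, and the blocks $M_{12}, M_{22}$ are never tested by $\p$.

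Finally I would exhibit the trivializing data. Define
\[
A := \begin{pmatrix} 0 & -M_{21}^{\,t} \\ M_{21} & 0 \end{pmatrix}, \qquad \b := \begin{pmatrix} 0 \\ \t_2 \end{pmatrix}.
\]
Then $A$ is manifestly skew-symmetric and, since $\p_i$ has zero in its last $D-d$ coordinates, a direct block computation gives $A\p_i + \b = (0,\; M_{21}\bar\p_i + \t_2) = \p'_i$, which by Equation~(\ref{eqn:skew}) means $\p'$ is trivial. I do not expect any real obstacle here; the only thing to be careful about is that the orthogonal change of basis does not affect the status of an infinitesimal flex as affine or trivial (it conjugates $M$ and $A$ by an orthogonal matrix, preserving both the affine form and skew-symmetry), so the argument goes through without loss of generality.
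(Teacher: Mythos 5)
Your proof is correct and follows essentially the same route as the paper: pass to orthogonal coordinates adapted to $\langle \p \rangle$, observe that only the off-diagonal block of $M$ (your $M_{21}$, the paper's $Y^t$) acts on the configuration, and replace $M$ by the explicitly skew-symmetric matrix with blocks $\pm M_{21}$. Your handling of the translation part $\t$ (showing $\t_1=0$ and keeping $\b=(0,\t_2)$) is in fact slightly more careful than the paper's, which silently drops $\t$.
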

\begin{proof}
Without loss of generality, using an orthonormal 
change of coordinates in $\R^{d'}$, we can assume that
the affine span of this framework $(G,\p)$
agrees with the 
the first $d$ dimensions of $\R^{d'}$, thus for
each $\p_i$, its last $d'-d$ coordinates are zero, and for
$\p'_i$, its first $d$ coordinates are zero.

Thus $M$ must have  the  block form
\ba
M=
\begin{bmatrix}
0 & X\\
Y^t & Z
\end{bmatrix}
\ea
where $X$ is $d$-by-$(d'-d)$, 
$Y^t$ is $(d'-d)$-by-$d$ and 
$Z$ is $(d'-d)$-by-$(d'-d)$.
(Also, the first $d$ coordinates of $\t$ must vanish.)

Since the last $d'-d$ coordinates of each $\p_i$ are zero, we can replace
$M$ with 
\ba
A:=
\begin{bmatrix}
0 & -Y\\
Y^t & 0 
\end{bmatrix}
\ea
and still obtain the same flex, $\p'_i = A \p_i$.
As $A$ is skew symmetric, this proves that $\p'$ is trivial. \qed
\end{proof}

\begin{lemma}
\label{lem:df}
If a framework $(G,\p)$ has a non-trivial affine infinitesimal  flex 
$\p'$ then the $\p'$ is deforming.
\end{lemma}
\begin{proof}
From Lemma~\ref{lem:at} the component of $\p'$ that is
orthogonal to 
$\langle \p \rangle$ is trivial, and thus can be subtracted from
$\p'$ without changing its non-triviality.
From Lemma~\ref{lem:ntp},
this remaining flex must be deforming.
The trivial orthogonal component can be added back without changing its
deforming property, thus the original $\p'$ must be deforming. \qed
\end{proof}

\begin{lemma}
\label{lem:ac}
If a framework $(G,\p)$ has a deforming affine infinitesimal  flex 
$\p'$ then the framework
has its edge directions at a conic at infinity for 
$\langle \p \rangle$.
\end{lemma}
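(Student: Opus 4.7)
The plan is to produce the required symmetric matrix directly from the affine flex. Given $\p'_i = M\p_i+\t$, the infinitesimal flex condition on an edge $\{i,j\}$ expands as $0 = (\p_i-\p_j)\cdot(\p'_i-\p'_j) = (\p_i-\p_j)^t M (\p_i-\p_j)$, and hence the same equation holds for the symmetrization $Q := \tfrac12(M+M^t)$. So $Q$ is the natural candidate for a conic-at-infinity witness; the only remaining job is to rule out the degenerate possibility that $(\p_k-\p_l)^tQ(\p_k-\p_l)=0$ for every pair $\{k,l\}$, including non-edges.

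I would argue this by contrapositive: assume $Q$ vanishes on every pair and show that $\p'$ must then be trivial. Lemma~\ref{lem:fq}, applied inside $\langle \p\rangle$, forces $Q$ restricted to a $d$-dimensional frame for $\langle\p\rangle$ to be the zero matrix. After an orthonormal change of coordinates in $\R^D$ placing $\langle\p\rangle$ in the first $d$ axes, write
\[
M=\begin{bmatrix} M_{11} & M_{12}\\ M_{21} & M_{22}\end{bmatrix},
\]
with $M_{11}$ the top-left $d$-by-$d$ block. Because each $\p_i$ has vanishing last $D-d$ coordinates, $M\p_i$ depends only on $M_{11}$ and $M_{21}$, and the vanishing of $Q$ on $\langle\p\rangle$ is exactly the statement that $M_{11}$ is skew-symmetric.

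Next I would split $\p'_i$ into its components inside $\langle\p\rangle$ and orthogonal to it: the in-plane part is $M_{11}\bar\p_i+\t_1$ with $M_{11}$ skew (hence a trivial flex inside $\langle\p\rangle$), and the orthogonal part is $M_{21}\bar\p_i+\t_2$, to which Lemma~\ref{lem:at} applies to give triviality. The step that needs the most care is assembling these two trivial pieces into a single $D$-by-$D$ skew matrix $A$ and vector $\b$ realizing all of $\p'$ at once. This works because the cross block of a skew $D$-by-$D$ matrix is unconstrained: take
\[
A=\begin{bmatrix} M_{11} & -M_{21}^t\\ M_{21} & 0\end{bmatrix},\qquad \b=\t,
\]
which is manifestly skew and satisfies $A\p_i+\b=\p'_i$ for every $i$ (since the last $D-d$ coordinates of $\p_i$ are zero, the $-M_{21}^t$ block contributes nothing). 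Thus $\p'$ is trivial, contradicting the hypothesis.

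This contradiction shows $Q$ is nonzero on some difference $\p_k-\p_l$; since $Q$ kills every edge difference by construction, the pair $\{k,l\}$ must be a non-edge, so $Q$ exhibits the edge directions of $(G,\p)$ on a conic at infinity for $\langle\p\rangle$. I expect the final packaging step, putting the in-plane and out-of-plane trivial pieces into one skew matrix, to be the only subtlety worth writing out; everything else is immediate from the symmetric-part calculation and the two preceding lemmas.
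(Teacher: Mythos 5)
Your proof is correct and lands on the same witness $Q=\tfrac12(M+M^t)$ as the paper, but it is organized along a genuinely different route. The paper first subtracts the component of $\p'$ orthogonal to $\langle\p\rangle$ (Lemma~\ref{lem:at}), invokes Lemma~\ref{lem:ntp} to conclude the surviving in-span flex is \emph{effective}, and then notes that effectiveness survives subtracting the trivial flex coming from the skew part of $M$, so the flex $Q\p_i$ is effective --- which is exactly the non-vanishing of $(\p_k-\p_l)^tQ(\p_k-\p_l)$ on some non-edge pair. You instead argue by contrapositive: if $Q$ annihilates every pair, Lemma~\ref{lem:fq} forces the in-span block $M_{11}$ to be skew, and you exhibit a single skew $D$-by-$D$ matrix
\[
A=\begin{bmatrix} M_{11} & -M_{21}^t\\ M_{21} & 0\end{bmatrix}
\]
with $A\p_i+\t=\p'_i$, so $\p'$ is trivial. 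This bypasses Lemma~\ref{lem:ntp} and the notion of an effective flex entirely; indeed your citation of Lemma~\ref{lem:at} is redundant, since your $A$ already absorbs the orthogonal component in one step. The block computation checks out: $A$ is skew precisely because $M_{11}^t=-M_{11}$ and the off-diagonal blocks of a skew matrix are unconstrained, and $A\p_i=M\p_i$ because the last $D-d$ coordinates of each $\p_i$ vanish. The paper's version keeps its three auxiliary lemmas doing separate jobs and reuses them elsewhere; yours is more self-contained at the cost of one explicit matrix assembly. Either is acceptable.
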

\begin{proof}
From the affine assumption,
we have $\p' = M\p + \t$, where $M$ is some $d'$-by-$d'$ matrix.
We can write $M=A+Q$ where $A$ is skew symmetric and $Q$
is symmetric. By removing from $\p'$ the 
trivial infinitesimal flex generated by $A\p_i + \t$,
the remaining 
infinitesimal 
flex
generated by $Q \p_i$ must still be deforming.

As an infinitesimal flex, 
for all edges 
$\{i,j\}$,
we have
$
(\p_i - \p_j) 
\cdot 
(Q\p_i - Q\p_j) 
=0$ 
which gives us
$
(\p_i - \p_j)^t Q
(\p_i - \p_j) 
=0$.
Since 
the flex
generated by $Q \p_i$ is deforming,
we have some non-edge pair
$\{k,l\}$, with
$(\p_k - \p_l)^t Q
(\p_k - \p_l) \neq0$. 
This gives us  our desired conic at infinity. \qed
\end{proof}

We can now prove one direction of 
Theorem~\ref{thm:ss}:
If $(G,\p)$ is super stable, it must have a PSD equilibrium stress
matrix $\Omega$ of rank $n-d-1$. 
From Proposition~\ref{prop:maxrank},
the corresponding stress energy
must be  positive on any non-trivial infinitesimal flex unless it is
an affine infinitesimal flex. 
From Lemmas~\ref{lem:df} and~\ref{lem:ac},
such a non-trivial affine infinitesimal flex
can only exist
if the edge directions of $(G,\p)$ are on a conic at infinity
for 
$\langle \p \rangle$. But our assumption of super stability also rules this out.
Thus $(G,\p)$ must be prestress stable in $\RR^{d'}$.\qed

To prove that prestress stability in $\RR^{d'}$ implies super stability,
our main step is to show that, in any dimension $d'>d$,
\emph{any} 
one-dimensional configuration that is  not an
affine image of $\p$ can be used to generate  a non-trivial
infinitesimal flex for $(G,\p)$. Thus the blocking
equilibrium stress, assumed by prestress stability,
must be PSD and of rank $n-d-1$.

\begin{lemma}
\label{lem:pssw}
If $(G,\p)$, a framework with a $d$-dimensional affine span
in $\R^{d+1}$,
is  prestress stable, then it must have a
PSD equilibrium stress matrix of rank $n-d-1$.
\end{lemma}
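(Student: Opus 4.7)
The plan is to exploit the freedom of a large ambient dimension $D$ to convert universal prestress stability into a direct positivity statement about $\Omega$ acting on $\R^n$. For any scalar configuration $\q = (q_1, \dots, q_n) \in \R^n$, I would lift it to $\R^D$ by picking a unit vector $\e \in \R^D$ orthogonal to $\langle \p \rangle$ and setting $\p'_i := q_i \e$. Because each $\p'_i - \p'_j$ lies in $\mathrm{span}(\e)$ and is therefore orthogonal to $\langle \p \rangle$, the first-order condition $(\p_i - \p_j)\cdot(\p'_i - \p'_j) = 0$ holds automatically on \emph{every} pair, so $\p'$ is a first-order flex of $(G,\p)$ for every choice of $\q$.

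The key step is to identify which $\q$ give trivial flexes. Let $V \subset \R^n$ be the $(d+1)$-dimensional subspace spanned by $\mathbf{1}$ and by the $D$ coordinate functions of $\p$ (only $d$ of which are independent modulo $\mathbf{1}$). The forward implication is immediate: if $\p'_i = A\p_i + \b$ with $A$ skew-symmetric, then taking the inner product with $\e$ writes $q_i$ as an affine function of $\p_i$, so $\q \in V$. For the converse, given $q_i = \v^t \p_i + c$ with $\v \in \langle \p \rangle$ and $c \in \R$, I would use the explicit skew-symmetric matrix $A := \e \v^t - \v \e^t$ together with the translation $\b := c\e$; a short computation using $\e^t \p_i = 0$ verifies that $A\p_i + \b = q_i \e = \p'_i$, so $\p'$ is trivial. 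Hence $\p'$ is non-trivial if and only if $\q \notin V$.

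With the equivalence in hand, I would apply universal prestress stability directly. By hypothesis there is an equilibrium stress $\omega$ with matrix $\Omega$ such that $E_\omega(\p') > 0$ on every non-trivial first-order flex in $\R^D$. Since $(\p'_i - \p'_j)^2 = (q_i - q_j)^2$, this says exactly $\q^t \Omega \q > 0$ for all $\q \notin V$. On the other hand, $V \subseteq \ker \Omega$ automatically: $\Omega \mathbf{1} = 0$ because the row sums vanish, and $\Omega \p^{(k)} = 0$ for each coordinate because $\omega$ is an equilibrium stress. Combining, $\Omega$ is PSD with kernel exactly $V$, so it has rank $n - d - 1$, as required.

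I expect the only real obstacle to be the triviality characterization in the second paragraph; the skew-symmetric matrix $\e \v^t - \v \e^t$ does the real work there, and I want to be sure it genuinely hits every affine-of-$\p$ test configuration. Once this equivalence is clean, the rest is just unpacking the definition of universal prestress stability on the test vectors $\p'_i = q_i \e$ and combining with the automatic containment $V \subseteq \ker \Omega$.
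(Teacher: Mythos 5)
Your proposal is correct and follows essentially the same route as the paper's proof: lift a scalar configuration $\q\in\R^n$ into a single coordinate direction orthogonal to $\langle\p\rangle$ to get an automatic first-order flex, observe that such a flex is trivial exactly when $\q$ is an affine function of $\p$, and then read off positivity of $\Omega$ on a complement of that $(d+1)$-dimensional space. The only cosmetic difference is at the end, where the paper invokes the maximal-rank bound of Proposition~\ref{prop:maxrank} to conclude PSD-ness and rank $n-d-1$, while you deduce the same directly from $V\subseteq\ker\Omega$; your explicit verification of the triviality equivalence via $A=\e\v^t-\v\e^t$ fills in a step the paper leaves implicit.
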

\begin{proof}
Using an orthonormal 
change of coordinates in $\R^{d+1}$, we can assume that
the affine span of this framework $(G,\p)$
agrees with the 
the first $d$ dimensions of $\R^{d+1}$, thus for
each $\p_i$, its last  coordinate is zero.

Let $\p' \in \R^{n(d+1)}$ be any ``configuration'', such that each
$\p'_i$ has zero values for all but its 
last  coordinate.
Since $\p_i$ has zero values in its last coordinate, then
$\p'$ must always be an infinitesimal flex for $(G,\p)$.
Let $v_i$ be the last coordinate of $\p_i'$, with
$\v$ a vector in $\R^n$.
This infinitesimal flex, $\p'$, will be non-trivial unless 
$\v$  arises as a skew-symmetric affine image of $\p$ and thus
can be written as
$v_i = \a^t \p_i + t$ for a fixed $d$-vector $\a$ and scalar $t$.
Thus we can generate a space of non-trivial infinitesimal flexes
of dimension $n-d-1$.

From prestress stability in $\RR^{d+1}$, $(G,\p)$ must have
an equilibrium stress matrix $\Omega$ with
$\v^t \Omega \v > 0$ when $\p'$ is non-trivial. Thus $\Omega$  must
have positive energy on a linear
space of dimension $n-d-1$. As this is the
maximum possible rank for an equilibrium stress matrix 
(Proposition~\ref{prop:maxrank})
it must
in fact be PSD and of rank $n-d-1$.\qed
\end{proof}

\begin{lemma}
\label{lem:pssc}
If $(G,\p)$, a framework with a $d$-dimensional affine span
in $\R^{d+1}$,
is  prestress stable, then it 
cannot have its edge directions
on a conic at infinity
of
$\langle \p \rangle$.
\end{lemma}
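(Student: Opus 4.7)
My plan is to argue by contradiction. Suppose the edge directions of $(G,\p)$ do lie on a conic at infinity of $\langle \p \rangle$; then there exists a nonzero symmetric $D$-by-$D$ matrix $Q$ with $(\p_i-\p_j)^t Q(\p_i-\p_j)=0$ on every edge $\{i,j\}$ but $(\p_k-\p_l)^t Q(\p_k-\p_l)\neq 0$ on some non-edge pair $\{k,l\}$. I will use $Q$ to manufacture a non-trivial first-order flex in $\R^D$ whose stress energy vanishes under every equilibrium stress, contradicting universal prestress stability.

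First I would define $\p'_i := Q\p_i$, regarded as a configuration in $\R^D$. The conic condition translates directly into $(\p_i-\p_j)\cdot(\p'_i-\p'_j)=(\p_i-\p_j)^t Q(\p_i-\p_j)=0$ on every edge, so $\p'$ is an infinitesimal flex of $(G,\p)$ in $\R^D$. The nonzero non-edge term makes $\p'$ effective in the sense of the definition preceding Lemma~\ref{lem:ntp}. Any trivial infinitesimal flex $A\p_i+\b$ with $A$ skew-symmetric satisfies $(\p_i-\p_j)^t A(\p_i-\p_j)=0$ on every pair (since the quadratic form of a skew matrix vanishes), so effective flexes are automatically non-trivial, and hence $\p'$ is a non-trivial first-order flex in $\R^D$.

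Next I would check that the stress energy of $\p'$ vanishes for every equilibrium stress $\omega$. Any stress matrix $\Omega$ satisfies $\Omega \mathbf{1}=0$ by construction (row sums zero), and the equilibrium condition $\sum_j\omega_{ij}(\p_i-\p_j)=0$ for every vertex $i$ is exactly the relation $\Omega\p=0$, viewing $\p$ as the $n$-by-$D$ matrix of rows $\p_i^t$. Thus the all-ones vector and every column of $\p$ lie in $\ker\Omega$. The flex $\p'=\p Q^t$ is an affine image of $\p$, so each of its columns is a linear combination of columns of $\p$ and therefore also lies in $\ker\Omega$. This gives $\Omega\p'=0$ and hence $E_\omega(\p')=\operatorname{tr}((\p')^t\Omega\p')=0$ for every equilibrium stress $\omega$.

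Finally, universal prestress stability demands some equilibrium stress $\omega$ with $E_\omega(\p'')>0$ on every non-trivial first-order flex $\p''$ in $\R^D$; applied to the non-trivial flex $\p'$, this contradicts the vanishing just established. There is no real obstacle in the argument; the only point to keep in mind is that the excerpt's definition of conic at infinity already allows $Q$ to be $D$-by-$D$, which is exactly what makes $Q\p_i$ a bona fide configuration in the larger ambient space where universal prestress stability is tested.
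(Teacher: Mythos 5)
Your proof is correct, but it takes a genuinely different route from the paper's. The paper argues through global rigidity: universal prestress stability implies universal (local, hence global, by Remark~\ref{rem:globalUR}) rigidity, so there can be no non-congruent framework with the same edge lengths, and Proposition~\ref{prop:conic} then says a conic at infinity would produce exactly such a framework as an affine image of $\p$. You instead work directly with the definition of prestress stability in $\R^D$: the conic matrix $Q$ yields the affine flex $\p'_i = Q\p_i$, which is a first-order flex by the vanishing on edges, effective (hence non-trivial, since trivial flexes $A\p_i+\b$ with $A$ skew have vanishing rigidity form on all pairs) by the non-vanishing on some non-edge pair, and annihilated by every equilibrium stress matrix because the columns of the configuration matrix lie in $\ker\Omega$. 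That contradicts the existence of any blocking stress. Your argument is more elementary and self-contained: it avoids the external results behind Remark~\ref{rem:globalUR} and Proposition~\ref{prop:conic}, using only linear algebra, and it is essentially the converse direction of the paper's own Lemma~\ref{lem:ac} (which extracts a conic from a non-trivial affine flex) combined with the kernel observation from Proposition~\ref{prop:maxrank}. What the paper's shorter proof buys is brevity, at the cost of leaning on the universal-local-implies-global machinery; what yours buys is a direct, definition-level contradiction that would also work verbatim for the weaker hypothesis of second-order rigidity blocked by a single stress.
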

\begin{proof}
Since $(G,\p)$ is 
prestress stable, it is rigid. 
A rigid framework cannot have its edge directions on a
conic at infinity~\cite{Shaping-space}.\qed
\end{proof}

This gives us the other direction of 
Theorem~\ref{thm:ss}:
If $(G,\p)$ is prestress stable in $\RR^{d'}$, it is certainly prestress
stable when thought of as a framework in $\RR^{d+1}$. 
We now simply combine Lemmas~\ref{lem:pssw} and~\ref{lem:pssc}
to conclude that $(G,\p)$ is super stable. 
\qed

\bibliographystyle{plain}
\bibliography{framework}

\end{document}